\documentclass[11pt]{article}

\usepackage[numbers]{natbib}
\usepackage{amsmath}
\usepackage{amssymb}
\usepackage{amsthm}
\usepackage{xcolor}
\usepackage{array}
\usepackage{booktabs}
\usepackage{tabularx}
\usepackage{hyperref}
\usepackage{algorithm}
\usepackage{float}
\usepackage[noend]{algpseudocode}
\usepackage{pgfplots}
\usepackage{placeins}
\pgfplotsset{compat=1.18}
\usepackage{enumitem}\usepackage[margin=1in]{geometry}

\hypersetup{colorlinks=true,citecolor=blue,linkcolor=blue,hypertexnames=false}
\allowdisplaybreaks
\usepackage[T1]{fontenc} 
\usepackage{fourier}

\DeclareMathAlphabet{\pazocal}{OMS}{zplm}{m}{n}
\SetMathAlphabet{\pazocal}{bold}{OMS}{zplm}{b}{n}
\usepackage{bm}
\newcommand{\paperTitle}{Three Standard Deviations Suffice While One Does Not}
\newcommand{\paperAuthor}{Victor Reis\thanks{Microsoft Research, Redmond. \texttt{victorol@microsoft.com}.} \and Zhao Song\thanks{\texttt{magic.linuxkde@gmail.com}.}}

\newcommand{\E}{\mathbb E}
\newcommand{\R}{\mathbb R}

\newcommand{\disc}{\operatorname{disc}}
\newcommand{\dist}{\operatorname{dist}}
\newcommand{\Pp}{\mathbb P}
\theoremstyle{plain}
\newtheorem{theorem}{Theorem}[section]
\newtheorem{lemma}[theorem]{Lemma}

\newtheorem{proposition}[theorem]{Proposition}

\theoremstyle{definition}

\begin{document}

\date{}
\title{\paperTitle}
\author{\paperAuthor}
\maketitle
\begin{abstract}
Spencer's 1985 ``six standard deviations suffice'' theorem shows that every $A \in [-1,1]^{n \times n}$ has a sign vector $x \in \{-1,1\}^n$ with $\|Ax\|_\infty \le 6\sqrt{n}$. We show the upper bound $\sqrt{3\operatorname{arsinh}(10)}\sqrt{n}+4 < 2.9992 \sqrt{n} + 4$ by directly rounding the minimizer of a potential function to a vertex of the cube.

We also show that, for every power of two $n \ge 2^{50}$, there exists a matrix $A \in \{-1,1\}^{n \times n}$ such that $\|Ax\|_\infty >1.0000002\sqrt{n}$ for every choice of signs $x \in \{-1,1\}^n$. The construction simply replaces a $2^{-22}$ fraction of the columns of a Hadamard matrix with independent random sign vectors. This is the first improvement over the $\sqrt{n}$ lower bound of Olson and Spencer (1978), which uses a Hadamard matrix. 
\end{abstract}


\section{Introduction}
Given a matrix $A\in\R^{d\times n}$, its \emph{discrepancy} is defined as $\disc(A):=\min_{x\in\{-1,1\}^n}\|Ax\|_\infty$. The square case $d = n$ is of particular interest. We define
\[
D_n^{\pm}:=\max_{A\in\{-1,1\}^{n\times n}}\frac{\disc(A)}{\sqrt n},
\qquad
D_n:=\max_{A\in[-1,1]^{n\times n}}\frac{\disc(A)}{\sqrt n}.
\]
Clearly $D_n^{\pm}\le D_n$. We first discuss lower bounds. A Hadamard matrix of order $n$ is a matrix $H\in\{-1,1\}^{n\times n}$ satisfying $H^\top H=nI_n$. This identity gives, for every signing $x\in\{-1,1\}^n$,
\[
\|Hx\|_\infty
\ge\frac{\|Hx\|_2}{\sqrt n}
=\|x\|_2
=\sqrt n.
\]
Taking the minimum over signings shows that $D^{\pm}_n \ge \disc(H)/\sqrt{n} \ge 1$. This was first observed by Olson and Spencer~\cite{OlsonSpencer}. We briefly mention the Hadamard conjecture, formulated by Paley in 1933~\cite{Paley}, which asserts that a Hadamard matrix of order $n$ exists for every positive integer $n$ divisible by four~\cite{deLauney}.

For the Sylvester Hadamard matrices $H_{2^m}:=H_2^{\otimes m}$ of order $n=2^m$, where
$H_2:=\begin{pmatrix}1&1\\1&-1\end{pmatrix}$, this lower bound is attained whenever the exponent $m$ is even. Indeed, for $y:=(1,1,1,-1)^\top$ we have $H_4y=2y$. Thus, if $m=2k$ with $k\ge1$, the signing $x:=y^{\otimes k}$ satisfies $H_{2^{2k}}x=(H_4 y)^{\otimes k}=2^k x$ and $\disc(H_{2^{2k}})=2^k=\sqrt n$.

On the other hand, $\disc(H_2)=2$, so $D^{\pm}_2\ge\sqrt2$. The best known lower bound for $D_n$ for some $n$ is obtained by replacing the last column of $H_{2^3}$ by zero:
\[
\widetilde{H}_{2^3}:=\begin{pmatrix}
1& 1& 1& 1& 1& 1& 1&0\\
1&-1& 1&-1& 1&-1& 1&0\\
1& 1&-1&-1& 1& 1&-1&0\\
1&-1&-1& 1& 1&-1&-1&0\\
1& 1& 1& 1&-1&-1&-1&0\\
1&-1& 1&-1&-1& 1&-1&0\\
1& 1&-1&-1&-1&-1& 1&0\\
1&-1&-1& 1&-1& 1& 1&0
\end{pmatrix}.
\]
This matrix has discrepancy $\disc(\widetilde{H}_{2^3})=5$, and therefore $D_8\ge5/\sqrt8>1.76$~\cite{ReisThesis}. In his 2024 open-problem list, Bandeira conjectured~\cite[Conjecture 12]{BandeiraEtAl} that
\[
\limsup_{n\to\infty}D_n^{\pm}>1.
\]
He also conjectured that $\disc(H_{2^m})=\sqrt{2}\,\sqrt{2^m}$ for odd $m$. The subsequent collected version credits Buhai with disproving this latter conjecture~\cite[Conjecture 13 and Updates]{BandeiraEtAl}. It already fails at order $512$, where~\cite[\S3.1]{KavutYucel} $\disc(H_{512})\le28<32= \sqrt{2} \sqrt{2^9}$. More strongly, Schmidt's Theorem 1~\cite{Schmidt} implies that
\[
\lim_{m\to\infty}\frac{\disc(H_{2^m})}{\sqrt{2^m}}=1.
\]
Partially inspired by the matrix $\widetilde{H}_{2^3}$, our first main result is a proof of Bandeira's conjecture by replacing a small fraction of the columns of a Hadamard matrix with random signs. We fix $\delta:=2^{-22}$ throughout.

\begin{theorem}\label{thm:main}
Let $n\ge 2^{50}$ be a power of two and $H_0\in\{-1,1\}^{n\times n}$ be a Hadamard matrix. Fix any set of $\delta n$ columns and let $A:=\bigl[\,H\mid R\,\bigr]$ be obtained by replacing those columns with a matrix of independent random signs $R$.  Then
\[
\Pp\bigl[\disc(A)>(1+\delta)\sqrt n\bigr]
\ge 1-2^{-n/20000}.
\]
In particular, $\limsup_{n\to\infty}D_n^{\pm} > 1.0000002$.
\end{theorem}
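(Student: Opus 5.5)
The plan is a first-moment (union-bound) argument over signings $x=(y,z)$, with $y\in\{-1,1\}^{(1-\delta)n}$ acting on the kept Hadamard columns $H$ and $z\in\{-1,1\}^{\delta n}$ acting on $R$. A crude union bound over all $2^n$ signings cannot work. Fix $x$ and write $u:=Hy$ and $w:=Rz$. Then $w$ has i.i.d.\ coordinates, each a sum of $\delta n$ Rademachers, so
\[
\Pp\bigl[\|u+w\|_\infty\le T\bigr]=\prod_i p(u_i),\qquad p(t):=\Pp\bigl[|t+g|\le T\bigr],\qquad T:=(1+\delta)\sqrt n .
\]
Since $\|u\|_2^2=(1-\delta)n^2$ forces only a typical $|u_i|\approx\sqrt{(1-\delta)n}<T$, each factor is a constant close to $1$, not $\le 1/2$. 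Indeed, a Chernoff/Gaussian computation with weight $e^{-\lambda(u_i+g_i)^2}$ and $\lambda=a/n$ gives exponent $\approx a\delta(3+a)n>0$. So the proof must exploit the structure of the set of $u=Hy$ that are nearly flat, and cannot treat the $2^{(1-\delta)n}$ choices of $y$ as arbitrary.

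\textbf{Step 1: reduce to a counting problem.} I would reorganize the union bound by first taking the union over $z$, which costs only $2^{\delta n}$. For each fixed $z$ I would bound the probability that some $y$ satisfies $\|Hy+w\|_\infty\le T$. The key identity is orthogonality. Let $S$ be the replaced positions and $h_j$, $j\in S$, the removed Hadamard columns. For $v:=Ax$ we have $h_j^\top v=h_j^\top w$ for $j\in S$, and $h_i^\top v=n y_i+h_i^\top w$ for $i\notin S$. So $v$ lies in the cube $[-T,T]^n$, its $S$-coordinates in the Hadamard basis are pinned to $H_S^\top w$, and its other coordinates are $\pm n$ shifted by $H_{\bar S}^\top w$. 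Parseval gives
\[
\|v\|_2^2=\tfrac1n\Bigl(\textstyle\sum_{i\notin S}(n y_i+h_i^\top w)^2+\sum_{j\in S}(h_j^\top w)^2\Bigr),
\]
while the box constraint forces $\|v\|_2^2\le(1+\delta)^2n^2$.

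\textbf{Step 2: the random input.} Each $h_j^\top w=h_j^\top R z$ is, for fixed $z$, a sum of $\delta n^2$ independent signs. Hence $\sum_{j\in S}(h_j^\top w)^2\approx\delta^2n^3$, and the $h_i^\top w$ for $i\notin S$ are of order $\sqrt{\delta}\,n$. I would prove concentration of these quantities, uniformly over the $2^{\delta n}$ choices of $z$, with failure probability $2^{-\Omega(n)}$; this yields the $2^{-n/20000}$ term. The deterministic task is then: given $w$ with these ``generic'' statistics, no $y$ puts $v$ in the cube. The minimizing $y$ must align $y_i$ against $\operatorname{sign}(h_i^\top w)$ to shrink $\|v\|_2$. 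I would then use the fact that a vector in $[-T,T]^n$ with $\|v\|_2^2$ close to $n\cdot T^2$ must be nearly flat, $|v_i|\approx T$ for most $i$. Combining this with an $\ell_1$--$\ell_\infty$ duality test, $\|v\|_\infty\ge\langle t,v\rangle/\|t\|_1$ with $t$ built from the Hadamard columns, should show that the pinned $S$-components, of total energy $\approx\delta^2n^2$, together with the roughly $\sqrt\delta$-scale perturbations on $\bar S$, prevent flatness at level $(1+\delta)\sqrt n$.

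\textbf{Step 3: the main obstacle.} The hardest step is the quantitative flatness/anti-concentration estimate in Step 2. Pure $\ell_2$ bookkeeping fails, since cross terms $\langle Hy,w\rangle$ of size $\sqrt\delta\,n^2$ can cancel the $\delta n^2$ gain. I would first try a second-order argument: write $v_i^2=T^2-\epsilon_i$ with $\epsilon_i\ge0$, and use the exact Hadamard coordinates to show $\sum_i\epsilon_i$ must be $\Omega(\delta n^2)$ for every $y$. This contradicts the requirement that $\sum\epsilon_i\le T^2n-\|v\|_2^2$ is small once $y$ is chosen to make $\|v\|_2$ large. The fraction $\delta=2^{-22}$ and the condition $n\ge2^{50}$ would be tuned at the end to absorb constants, including the lattice/parity effects that the power-of-two $n$ makes clean. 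The final claim $\limsup D_n^{\pm}>1.0000002$ then follows because $1+2^{-22}>1.0000002$ and the probability is positive.
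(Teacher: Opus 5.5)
Your Steps 2--3 carry the whole difficulty but contain no argument for it. The claim that for every fixed ``generic'' $w$ no Boolean $y$ has $\|Hy+w\|_\infty\le T$ is essentially the theorem itself, and the tools you sketch provably cannot establish it.

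\begin{itemize}
\item Let $H_S$ be the matrix of removed columns and $P_S:=\frac1nH_SH_S^\top$. Any test vector $t$ in their span satisfies $\langle t,v\rangle=\langle t,P_Sw\rangle$ for every $y$. So the $\ell_1$--$\ell_\infty$ test certifies at most $\|P_Sw\|_\infty$, which is of order $\delta\sqrt{n\log n}\ll\sqrt n$.
\item More generally, any argument that remains valid for $y\in[-1,1]^{(1-\delta)n}$ must fail. The fractional point $y=-H^\top w/n$ has entries typically of order $\sqrt\delta$, so it essentially lies in the cube, and it gives exactly $Hy+w=P_Sw$.
\item The second-order accounting is vacuous. $\sum_i\epsilon_i=nT^2-\|v\|_2^2$ is an identity. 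As you note, $y$ may be chosen with $\langle Hy,w\rangle\approx-\Theta(\sqrt\delta\,n^2)$, so nothing forces $\|v\|_2$ to be large and nothing forces $v$ to be flat. A lower bound $\sum_i\epsilon_i=\Omega(\delta n^2)$ therefore contradicts nothing.
\end{itemize}

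The detour comes from a miscalculation at the start. For nearly flat $u=Hy$, the margin $T-|u_i|\approx\frac32\delta\sqrt n$ is only about $1.5\sqrt\delta$ standard deviations of $w_i$, while $-T$ is about $2/\sqrt\delta$ standard deviations away. Hence $p(u_i)=\frac12+O(\sqrt\delta)$, not close to $1$. Your Chernoff computation bounds the $\ell_2$-relaxed event $\|u+w\|_2^2\le nT^2$. That event is typical, since $\E\|u+w\|_2^2=n^2$, and it says nothing about the $\ell_\infty$ event.

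The paper's one-sided estimate (Lemma~\ref{lem:binomial}(d)) gives $p(u_i)\le\frac12\exp\bigl(4\sqrt\delta+(1-u_i^2/n)/\sqrt{2\pi\delta}\bigr)$. This exponent is affine in $u_i^2$, and $\|Hy\|_2^2=(1-\delta)n^2$ for every Boolean $y$. So every signing has probability at most $2^{-n}2^{n/320}$, and the union bound over all $2^n$ signings fails only by the factor $2^{n/320}$.

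The missing idea is to recover this factor while keeping the randomness of $R$ for each fixed signing:
\begin{itemize}
\item If $\|Hy\|_1\le0.99n^{3/2}$, the coordinates with $|u_i|>\sqrt n$ carry excess $\sum_{i\in I}(u_i^2-n)\ge n^2/200$. This improves the per-signing bound to $2^{-3n}$.
\item The map $y\mapsto\|Hy\|_1$ is $n$-Lipschitz and $\E\|H\varepsilon\|_1\le0.8\sqrt{1-\delta}\,n^{3/2}$. Talagrand's inequality for the convex set $\{y:\|Hy\|_1\le0.8n^{3/2}\}$ then shows that fewer than $2^n2^{-n/310}$ signings $x=(y,z)$ have $\|Hy\|_1>0.99n^{3/2}$.
\end{itemize}
Summing gives at most $2^{n}2^{-3n}+2^{-n/310}2^{n/320}<2^{-n/20000}$. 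Your preliminary union over $z$ is harmless, but it has to be followed by this first-moment argument over $y$, not by a deterministic argument for fixed $w$.
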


The proof is given in Section~\ref{sec:lower_bound}. Moving on to upper bounds, Spencer's ``six standard deviations suffice'' theorem~\cite{Spencer} gives $D_n\le6$; in fact it shows $D_n \le 5.32$~\cite[\S13.2]{AlonSpencer}. More generally, for $A \in [-1,1]^{d \times n}$, Spencer showed $\disc(A) \le 11 \sqrt{n\log(2d/n)}$. Bansal~\cite{Bansal} made the $O(\sqrt n)$ bound constructive in polynomial time, and Lovett and Meka~\cite{LovettMeka} gave a sticky random walk algorithm achieving the $O(\sqrt{n\log(2d/n)})$ bound. For the square case, Belshaw~\cite[Chapter 5]{Belshaw} obtained $D_n \le 5.2$ and claimed a proof of $D_n \le 3.7$ relying on private communications. Recently, Pesenti and Vladu~\cite[Theorem 4.5]{PesentiVladu} gave a polynomial-time algorithm achieving $4.1\sqrt n+O(1)$. Spielman conjectured that $D_n\le2$ for every $n\in \mathbb{N}$~\cite{Spielman}.

Our second main result gives the following bound for any number of vectors and any dimension; it is proved in Section~\ref{sec:direct_potential_upper_bound}. Recall that $\sinh t:=\frac{e^t-e^{-t}}2$ and $\operatorname{arsinh}t:=\log\bigl(t+\sqrt{1+t^2}\bigr)$ for $t \in \mathbb{R}$ is its inverse.
\begin{theorem}\label{thm:upper_intro}
For all $n,d \in \mathbb{N}$ and every matrix $A\in[-1,1]^{d\times n}$, there exists $x\in\{-1,1\}^n$ such that
\[
\|Ax\|_\infty\leq\sqrt{3n\operatorname{arsinh}(10d/n)}+4.
\]
In particular, $D_n\leq\sqrt{3\operatorname{arsinh}(10)}+4/\sqrt n$, which is less than 3 for $n\geq10^8$.
\end{theorem}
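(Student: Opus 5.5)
The plan is to prove the bound by a potential-function argument followed by a single randomized rounding step, which is then derandomized by averaging. Let $a_1,\dots,a_d$ be the rows of $A$ and fix a parameter $\lambda>0$ to be chosen at the end. For a fractional point $y\in[-1,1]^n$, round each coordinate independently to $x_j\in\{-1,1\}$ with $\E x_j=y_j$. Hoeffding's lemma applied coordinatewise gives
\[
\E\, e^{\pm\lambda\langle a_i,x\rangle}\le e^{\pm\lambda\langle a_i,y\rangle}\exp\Bigl(\tfrac{\lambda^2}{2}\sum_j a_{ij}^2(1-y_j^2)\Bigr).
\]
Since $|a_{ij}|\le1$, this yields
\[
\E\sum_i\cosh(\lambda\langle a_i,x\rangle)\le \Phi(y):=\exp\Bigl(\tfrac{\lambda^2}{2}(n-\|y\|_2^2)\Bigr)\sum_i\cosh(\lambda\langle a_i,y\rangle).
\]
Some $x$ in the support therefore has $\|Ax\|_\infty\le\lambda^{-1}\operatorname{arcosh}\Phi(y)$, and the whole task is to exhibit a $y$ with $\log\Phi(y)\lesssim \lambda^2 n\cdot(\text{small})+\log(d/n)$.

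We take $y^*$ to be a minimizer of $\log\Phi$ over the cube, which exists by compactness, and analyze it through first-order conditions. Let $p_i\propto\sinh(\lambda\langle a_i,y^*\rangle)$, normalized by $\sum_i\cosh(\lambda\langle a_i,y^*\rangle)$. For every coordinate $j$ with $|y^*_j|<1$, stationarity gives $\sum_i p_i a_{ij}=\lambda y^*_j$. For coordinates with $|y^*_j|=1$ we get the corresponding sign inequality instead. We then compare $\Phi(y^*)$ with $\Phi$ along the ray $t\mapsto t y^*$ and along directions that push free coordinates outward. Convexity of $\log\sum\cosh$ lets us control how much the $\cosh$ factor can grow. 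Meanwhile $\|y\|^2$ grows, so the Gaussian factor shrinks.

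Concretely, we compare $\Phi(y^*)$ against $\Phi(ty^*)$ for $t\in[0,1]$. At $t=0$ the value is $d\,e^{\lambda^2 n/2}$, which only gives $\sqrt{2n\log d}$. Integrating the derivative bound instead should give the following trade-off: the free part of $y^*$ satisfies $\lambda^2\|y^*_{\mathrm{free}}\|^2\le \lambda\sum_i p_i\langle a_i,y^*\rangle$. We combine this with the elementary inequality $u\tanh u\le \ldots$, or its $\sinh/\cosh$ analogue, to bound $\log\Phi(y^*)$ by
\[
\tfrac{\lambda^2 n}{2}\cdot\theta+\log\Bigl(\tfrac{d}{n}\cdot C\Bigr),
\]
where $\theta<1$ is a constant coming from how much mass the cube constraint forces to $\pm1$.

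Optimizing $\lambda$ then produces a bound of the form $\sqrt{c\,n\log(Cd/n)}$. The $\operatorname{arsinh}(10d/n)$ form should emerge naturally from solving the extremal equation $\sinh(\cdot)=10d/n$ that appears when $\sum_i\cosh$ is balanced against $n$. The additive $+4$ absorbs rounding of the last few coordinates and the gap between $\operatorname{arcosh}$ and $\log(2\,\cdot)$.

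The main obstacle is the middle step: turning the KKT conditions at $y^*$ into a quantitative statement that $\sum_j(1-(y^*_j)^2)$ is at most a constant fraction ($\approx 2/3$, to match the factor $3$) of $n$, with only a $\log(d/n)$ loss. The first thing to try is summing the stationarity identity against $y^*_j$, i.e.\ $\lambda\|y^*_{\mathrm{free}}\|^2=\sum_i p_i\langle a_i,y^*_{\mathrm{free}}\rangle$. We then bound the right side using Cauchy--Schwarz together with $\|\sum_i p_ia_i\|_2^2\le n\,(\sum_i|p_i|)^2$. A separate counting argument handles rows with large $|\langle a_i,y^*\rangle|$, and that counting is what introduces the $d/n$ dependence. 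If this route loses too much, the fallback is to add an explicit barrier term to $\Phi$ that rewards $|y_j|\to1$ and tune its weight jointly with $\lambda$.
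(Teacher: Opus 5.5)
Your plan has a genuine gap, and it is structural rather than technical.

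\textbf{The one-shot rounding cannot give $\log(d/n)$.} A single randomized rounding from a point with $R(y)=\Theta(n)$ cannot produce a $\log(d/n)$ dependence. Since $\cosh\ge1$, your potential satisfies $\Phi(y)\ge d\,e^{\lambda^2R(y)/2}$, so for every $\lambda>0$
\[
\lambda^{-1}\operatorname{arcosh}\Phi(y)\ \ge\ \frac{\lambda}{2}R(y)+\frac{\log d}{\lambda}\ \ge\ \sqrt{2R(y)\log d}.
\]
With your stated target $R(y^*)\approx\frac23 n$, this is of order $\sqrt{n\log n}$ for $d=n$. In particular, the intermediate estimate $\log\Phi(y^*)\le\frac{\lambda^2n}{2}\theta+\log(Cd/n)$ cannot hold when $d=n$ is large and $\lambda^2n=O(1)$, since $\log\Phi\ge\log d$ everywhere. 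That is exactly the regime your final choice of $\lambda$ requires. To succeed this way you would need a fractional point with $\|Ay\|_\infty\lesssim\sqrt{n\log(2d/n)}$ and $R(y)$ as small as $O(n/\log n)$ when $d=n$. Producing such a point is essentially the whole theorem.

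\textbf{Minimizing your $\Phi$ over the cube is circular.} One has $\partial_j^2\log\sum_i\cosh(\lambda\langle a_i,y\rangle)\le\lambda^2\max_i a_{ij}^2\le\lambda^2$, which is cancelled by the $-\lambda^2$ coming from $\frac{\lambda^2}{2}(n-\|y\|_2^2)$. So $\log\Phi$ is concave in each coordinate, and a minimizer can be taken at a vertex $x$, where $\Phi(x)=\sum_i\cosh(\lambda\langle a_i,x\rangle)$. This is just the original signing problem. There need be no free coordinates at all. The only first-order information at the vertex is $x_j\sum_ip_ia_{ij}\le\lambda$. Summed over $j$, with $u_i=\langle a_i,x\rangle$, it gives $\sum_iu_i\sinh(\lambda u_i)\le\lambda n\sum_i\cosh(\lambda u_i)$. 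That yields only $\max_i|u_i|\lesssim\lambda n+\lambda^{-1}\log d$, i.e.\ $O(\sqrt{n\log d})$.

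A smaller issue: the variance-sensitive bound $\E e^{\mu(x_j-y_j)}\le e^{\mu^2(1-y_j^2)/2}$ is false for biased coordinates (e.g.\ $y_j=0.9$, $\mu=-3$). Only a Bennett-type version with a $1+O(\lambda)$ factor in the exponent holds.

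\textbf{What the paper does instead.} The missing idea is a mechanism under which only $O(|J|)$ rows can matter, replacing the union bound over all $d$ rows. The paper never randomizes. It finds $z$ with $\|Az\|_\infty\le\Delta$ and $R(z)=4$, then rounds to the nearest sign vector, losing at most $R(z)$.

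To find $z$, it appends negated rows and minimizes the scale-normalized barrier potential $s^{-1}\sum_i\phi\bigl((\Delta-\langle a_i,z\rangle)/\sqrt s\bigr)$ over $\{R\ge4,\ \langle a_i,z\rangle\le\Delta\}$. Here $s=R(z)-4$ and $\phi(t)=5/\sinh(t^2/3)$. The choice of $\Delta$ is exactly what makes $\Phi(0)<1$; this is where $\operatorname{arsinh}(10d/n)$ comes from. It gives the budget $\sum_i\phi(t_i)<s\le|J|-4$ at a minimizer.

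Suppose $R(z)>4$. Consider the $(|J|-1)$-dimensional space of $v\perp z$ supported on the free coordinates $J$; these directions leave $s$ unchanged to first order. Second-order optimality along them gives $\sum_i\phi''(t_i)\langle a_i,v\rangle^2\ge L\|v\|_2^2$ with $L=\sum_i\psi(t_i)$. The capped trace lemma and the scalar inequality $\min\{\phi'',\theta\}\le\frac34\psi+\frac14\theta\phi$ then force $\dim V<|J|-1$, a contradiction.

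The normalization by $s$ makes the potential budget comparable to the number of free coordinates. Together with curvature information on a large subspace, this is what produces $\log(d/n)$ rather than $\log d$. First-order KKT conditions of a Chernoff-type estimator do not capture it.
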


\subsection{Proof overview}

We give a high-level description of the proofs of Theorems~\ref{thm:main} and \ref{thm:upper_intro}.

\textbf{Lower bound.}
By orthogonality, for every sign vector $u \in \{-1,1\}^{(1-\delta)n}$, the vector $Hu$ has the same squared Euclidean norm $\|Hu\|_2^2 = (1-\delta)n^2$. The $\ell_1$ norm of $Hu$, however, varies. If all coordinates of $Hu$ were equal in magnitude, then $\|Hu\|_1$ would attain its maximal value, namely $\sqrt{1-\delta}\,n^{3/2}$. If instead $\|Hu\|_1 \le 0.99 n^{3/2}$, the coordinates whose magnitudes exceed $\sqrt n$ have large Euclidean norm. To produce small discrepancy, the random columns must compensate for this, which turns out to be very unlikely.

The harder case is when $\|Hu\|_1 > 0.99 n^{3/2}$. Talagrand's concentration inequality shows this happens with probability less than $2^{-n/310}$. For each fixed sign vector $x = (u,v) \in \{-1,1\}^n$, we show the probability that every coordinate of $Ax = Hu + Rv$ is less than $(1+\delta)\sqrt{n}$ is upper bounded by $2^{-n}2^{n/320}$. Summing this bound over all $2^n$ sign vectors $x$ would give $2^{n/320}$, but restricting to the exceptional fraction gives the exponentially small probability $2^{n/320}2^{-n/310}$. Combining the two cases shows that, with high probability, no sign vector achieves small discrepancy.

\textbf{Upper bound.}
For the upper bound, fix $A\in[-1,1]^{d\times n}$ and assume $n>4$. We define $R(z):=n-\|z\|_2^2$ for $z\in[-1,1]^n$. Rounding $z$ to its nearest sign vector $x$ changes $Az$ by at most $R(z)$ in the $\ell_\infty$ norm. Thus it suffices to find $z$ with $R(z)=4$ and $\|Az\|_\infty\leq \Delta:=\sqrt{3n\operatorname{arsinh}(10d/n)}$.

Without loss of generality, append the negation of every row to $A$ and continue to denote the resulting matrix by $A$. Write its rows as $a_1,\ldots,a_{2d}$. Thus $\|Az\|_\infty=\max_{1\leq i\leq2d}\langle a_i,z\rangle$, so it suffices to bound these inner products without absolute values. Set $s(z):=R(z)-4$ and define $\phi(t):=5/\sinh(t^2/3)$ for $t>0$ and $\phi(0):=\infty$, with the convention $\phi(\infty):=0$. We minimize the potential function
\[
\Phi(z):=\begin{cases}
\displaystyle\frac1{s(z)}\sum_{i=1}^{2d}\phi\Bigl(\frac{\Delta-\langle a_i,z\rangle}{\sqrt{s(z)}}\Bigr),&s(z)>0,\\[6pt]
0,&s(z)=0.
\end{cases}
\]
on the compact set $\pazocal K:=\{z\in[-1,1]^n:R(z)\geq4,\ \langle a_i,z\rangle\leq\Delta\text{ for every }i\in[2d]\}$. The potential is lower semicontinuous on $\pazocal K$, and the origin has $\Phi(0)<1$, so a finite minimum is attained. 

The barrier $\phi$ has two useful behaviors: $\phi(t)\sim15/t^2$ near zero, so approaching a row threshold incurs an unbounded penalty, while $\phi(t)\sim10e^{-t^2/3}$ for large $t$, so distant rows contribute little. A pure exponential $5e^{-t^2/3}$ would have the same type of tail but be finite at zero.

We argue by contradiction to show that no minimizer $z$ of $\Phi$ on $\pazocal K$ can have $R(z) > 4$; otherwise, we find a direction on which the restriction of $\Phi$ cannot have nonnegative second derivative. This uses both properties of $\phi$ and the fact that entries of $A$ are bounded.

\section{Proof of the lower bound}{\label{sec:lower_bound}}

\subsection{Preliminaries}

We will need Talagrand's concentration inequality to control the tail of the $\ell_1$ norm $\|Hu\|_1$. Here distance is Euclidean: $\dist(x,K):=\inf_{y\in K}\|x-y\|_2$. We refer to the expositions {\cite[\S1, (2)]{Tao}; \cite[Lemma 2.14]{Rothvoss}}.

\begin{lemma}[Talagrand's concentration inequality]\label{lem:talagrand}
Let $\varepsilon$ be uniform on $\{-1,1\}^d$, and let $K\subseteq\R^d$ be closed and convex with $\Pp[\varepsilon\in K]>0$. Then
\[
\E\left[\exp\left(\frac{\dist(\varepsilon,K)^2}{16}\right)\right]
\le \frac{1}{\Pp[\varepsilon\in K]}.
\]
\end{lemma}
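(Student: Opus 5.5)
The statement is the classical Talagrand convex-distance inequality for the uniform measure on the cube, specialized to Euclidean distance from a convex set. I will deduce it from the general convex-distance form and not reprove it from scratch. Recall Talagrand's inequality for product measures: for $\varepsilon$ with independent coordinates and any measurable $S\subseteq\{-1,1\}^d$,
\[
\E\bigl[\exp\bigl(d_T(\varepsilon,S)^2/4\bigr)\bigr]\le \frac{1}{\Pp[\varepsilon\in S]},
\]
where $d_T(x,S):=\sup_{\alpha\ge0,\|\alpha\|_2=1}\min_{y\in S}\sum_{i:x_i\ne y_i}\alpha_i$ is the convex distance. I take $S:=K\cap\{-1,1\}^d$, so $\Pp[\varepsilon\in S]=\Pp[\varepsilon\in K]>0$. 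It then suffices to show the pointwise comparison $\dist(x,K)\le 2\,d_T(x,S)$ for every $x\in\{-1,1\}^d$. This gives $\dist^2/16\le d_T^2/4$, and monotonicity of the exponential finishes the proof.

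For the comparison I use the standard dual characterization of $d_T$, obtained by a minimax argument: $d_T(x,S)=\min\{\|w\|_2: w\in\operatorname{conv}\{U_x(y):y\in S\}\}$, where $U_x(y)\in\{0,1\}^d$ is the indicator vector of the coordinates on which $x$ and $y$ differ. Choose a convex combination $\sum_j\lambda_j U_x(y^j)$ attaining this minimum, with $y^j\in S$, and set $z:=\sum_j\lambda_j y^j$. Then $z\in K$, since $K$ is convex and contains all $y^j$.

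Coordinatewise, $x_i-y^j_i$ equals $2x_i$ if $i$ is a disagreement coordinate and $0$ otherwise. Hence $x-z=2\,(x_i w_i)_i$ with $w=\sum_j\lambda_jU_x(y^j)$, so $\|x-z\|_2=2\|w\|_2=2d_T(x,S)$. Therefore $\dist(x,K)\le 2d_T(x,S)$.

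Closedness of $K$ is not really needed beyond ensuring that $\dist$ is well defined. The only nontrivial ingredient is the convex-distance inequality itself with constant $1/4$, which I quote from the cited expositions~\cite{Tao,Rothvoss}. The main point to get right is the constant bookkeeping: the cube has side length $2$, which produces the factor $2$ in the comparison and hence the $16$ in the statement.
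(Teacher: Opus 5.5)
Your proof is correct, but it is not what the paper does. The paper gives no argument of its own; it quotes this Euclidean convex-set form from the expositions of Tao and Rothvoss. There, at least in Tao's account, the statement is proved directly by induction on the dimension: one slices $K$ along the last coordinate and invokes convexity inside each inductive step. The constant $1/16$ then arises as Talagrand's $1/4$ divided by the squared side length of the cube.

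You instead take as the black box the more general convex-distance inequality $\E[\exp(d_T(\varepsilon,S)^2/4)]\le 1/\Pp[\varepsilon\in S]$, which holds for arbitrary events in a product space. You reduce to it through the pointwise comparison $\dist(x,K)\le 2\,d_T(x,K\cap\{-1,1\}^d)$. Convexity of $K$ is used exactly once, to place the barycenter $z$ of the witnessing sign vectors in $K$. The identity $x_i-y_i\in\{0,2x_i\}$ together with $|x_i|=1$ then gives $\|x-z\|_2=2\|w\|_2$.

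The details check out. $S$ is finite and nonempty, so the minimax description of $d_T$ as the least norm in $\operatorname{conv}\{U_x(y):y\in S\}$ is valid and the minimum is attained.

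Your route gives a transparent accounting of the constant $16=4\cdot4$ and a slightly stronger pointwise statement. The cost is that it depends on the stronger theorem plus the minimax identity. The direct induction proves exactly the convex-set statement needed, with no detour through convex distance.
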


For $k \in \mathbb{N}$, we denote by $S_k$ the random variable distributed as the sum $\sum_{j=1}^k \varepsilon_j$ of $k$ independent uniform random signs. We will need tail bounds on $S_k$; a similar bound was proved by Massart~\cite[Lemma 1.2]{Massart}. For completeness, the following estimates are proved in Appendix~A.

\begin{lemma}\label{lem:binomial}
Let $k\ge 2$ be even. Then the following hold:
\begin{enumerate}
\item[(a)] We have $\E[|S_k|]\le0.8\sqrt k$.
\item[(b)] Hoeffding's lemma~\cite[(4.16)]{Hoeffding}: For every $t\in\R$, we have $\E[\exp(tS_k/\sqrt k)]\le e^{t^2/2}$.
\item[(c)] For every $t\in\R$, we have $\Pp[S_k\le t\sqrt k-1]\le \frac12\exp(\sqrt{2/\pi}\,t)$.
\item[(d)] Let $u\ge0$ and suppose in addition that $k\ge2^{24}$. Then
\[
\Pp\left[u+\sqrt{\frac\delta k}\,S_k\le 1+\delta\right]
\le \frac12\exp\left(
4\sqrt\delta+
\frac{1-u^2}{\sqrt{2\pi\delta}}
\right).
\]
\end{enumerate}
\end{lemma}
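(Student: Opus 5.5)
We treat the four parts separately, since each is an elementary estimate about the symmetric binomial $S_k$ with $k$ even.

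\textbf{Part (a).} We use the classical closed form for the mean absolute deviation, $\E|S_k| = k\binom{k}{k/2}2^{-k}$ for even $k$. It follows from the identity $\E|S_k| = 2\E[S_k^+]$ together with the telescoping $\sum_{j>k/2}(2j-k)\binom{k}{j}=k\binom{k-1}{k/2}$. Combining this with the central binomial bound $\binom{k}{k/2}2^{-k}\le 1/\sqrt{\pi k/2}$ gives $\E|S_k|\le \sqrt{2k/\pi}<0.7979\sqrt k\le 0.8\sqrt k$.

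\textbf{Part (b).} This is Hoeffding's lemma. We have $\E e^{tS_k/\sqrt k}=\cosh(t/\sqrt k)^k$, and $\cosh y\le e^{y^2/2}$ follows by comparing Taylor coefficients, since $(2j)!\ge 2^j j!$.

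\textbf{Part (c).} We need a bound that is linear in $t$ in the exponent. For $t\ge0$ the right side is at least $1/2$, while the left side is trivially at most $1$. So part (c) is only needed in a usable form for $t<0$, and there I would argue as follows.
\begin{itemize}
\item By symmetry, $\Pp[S_k\le -1]\le \tfrac12$, which settles $t=0$ and, by monotonicity of the left side, gives $\Pp[S_k\le t\sqrt k-1]\le\tfrac12$ for all $t\le 0$.
\item To obtain the exponential decay, I would try the exponential moment method. For $\lambda>0$, Markov gives $\Pp[S_k\le s]\le e^{\lambda s}\cosh(\lambda)^k$, but this yields Gaussian rather than linear decay. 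The $\tfrac12$ prefactor and the constant $\sqrt{2/\pi}$ instead suggest log-concavity: the distribution function of $S_k$ at integer points is log-concave.
\item Let $F(j)=\Pp[S_k\le -k+2j]$. Log-concavity gives $F(j-m)\le F(j_0)\rho^m$, where $j_0=k/2-1$ (so that $F(j_0)=\Pp[S_k\le -2]\le \tfrac12$) and $\rho$ is the ratio $F(j_0-1)/F(j_0)$. The ratio is then bounded using the hazard-rate estimate $p(j_0)/F(j_0)$, where $p$ is the point mass. Since $p(k/2)\approx\sqrt{2/(\pi k)}$ and $F\le \tfrac12$, this gives a decay of $\sqrt{2/\pi}$ per unit of $t$, after converting steps of size $2$ into units of $\sqrt k$.
\end{itemize}
I expect this part to be the main obstacle: the lower bound for the hazard near the median must be made rigorous with exactly the constant $\sqrt{2/\pi}$. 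The alternative is a Mills-ratio-type argument comparing with the Gaussian, using the exact value of the point mass at the centre.

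\textbf{Part (d).} Rewrite the event as $S_k\le \sqrt{k/\delta}\,(1+\delta-u)$ and apply (c) with
\[
t=\frac{1+\delta-u}{\sqrt\delta}+\frac1{\sqrt k}.
\]
This gives the bound $\tfrac12\exp\bigl(\sqrt{2/\pi}\,(1+\delta-u)/\sqrt\delta+\sqrt{2/\pi}/\sqrt k\bigr)$. It then remains to check the inequality
\[
\sqrt{2/\pi}\,\frac{1+\delta-u}{\sqrt\delta}\le \frac{1-u^2}{\sqrt{2\pi\delta}}+\text{slack}.
\]
Multiplying by $\sqrt{2\pi\delta}$, this reads $2(1-u)+2\delta\le 1-u^2+\text{slack}'$. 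Since $1-u^2 = 2(1-u)-(1-u)^2$, the inequality holds up to the quadratic term $(1-u)^2/\sqrt{2\pi\delta}$, which has the wrong sign. So the linear exponential bound alone does not suffice, and (d) must also use a Gaussian-type tail.

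For $u$ near $1$, the linear bound from (c) with the $4\sqrt\delta$ slack suffices: the slack absorbs the terms $2\delta/\sqrt{2\pi\delta}$ and $1/\sqrt k\le 2^{-12}$, the latter being where $k\ge 2^{24}$ is used. For $u$ far from $1$, I would instead use an optimized Chernoff bound from (b), tilting with $\lambda=(u-1-\delta)/\sqrt\delta$. This gives $\exp(-(u-1-\delta)^2/(2\delta))$, which is smaller than the target because $1/(2\delta)\gg 1/\sqrt{2\pi\delta}$. The case split is at $|u-1|$ of order $\sqrt\delta$.

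For $u<1$ the target exceeds $\tfrac12$ once $(1-u^2)/\sqrt{2\pi\delta}$ is large, and the bound from (c) is likewise comparable. Checking the crossover region carefully is the routine but fiddly final step.
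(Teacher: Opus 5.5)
Your parts (a) and (b) are the paper's arguments. Your plan for (d) is also the paper's: three ranges of $u$ split at $|u-1|$ of order $\sqrt\delta$, part (c) with $t=\frac{1+\delta-u}{\sqrt\delta}+\frac1{\sqrt k}$ in the middle range, and the Hoeffding tilt above it. The crossover works for a split $|u-1|\le a\sqrt\delta$ with $a$ roughly in $(2.22,2.65)$; the paper takes $a=\frac52$. The tilt does not win because $1/(2\delta)\gg1/\sqrt{2\pi\delta}$: at $u-1=a\sqrt\delta$ both exponents are $O(1)$. It wins because $a^2/2$ exceeds $\sqrt{2/\pi}\,a+\log2$.

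The genuine gap is in (c), in two places.

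First, dismissing $t\ge0$ is a non sequitur. A right side $\ge\frac12$ and a left side $\le1$ prove nothing for $0\le t<\sqrt{\pi/2}\,\log2$. Your own (d) needs this range: for $u$ at or just below $1+\delta$, the parameter $t$ is small and positive while the probability is about $\Pp[S_k\le0]=\frac{1+\beta_k}{2}>\frac12$, where $\beta_k:=2^{-k}\binom{k}{k/2}$. The missing step is the upper-half estimate for $j\ge k/2$:
$F(j)\le\frac{1+\beta_k}{2}+(j-\frac k2)\beta_k=\frac12\bigl(1+\beta_k(2j-k+1)\bigr)\le\frac12\exp\bigl(\sqrt{2/(\pi k)}\,(2j-k+1)\bigr)$.
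This follows from $\beta_k\le\sqrt{2/(\pi k)}$ and $1+y\le e^y$.

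Second, for $t<0$ your anchor $F(j_0)\le\frac12$ is too weak. At $t=-1/\sqrt k$ the claim reads $\Pp[S_k\le-2]\le\frac12e^{-\sqrt{2/(\pi k)}}$. In general, iterating $\rho\le e^{-2\sqrt{2/(\pi k)}}$ from $\frac12$ gives $F(j_0-m)\le\frac12e^{-2m\sqrt{2/(\pi k)}}$, which is a factor $e^{\sqrt{2/(\pi k)}}$ too large when $t\sqrt k=-2m-1$. You need the exact value $F(j_0)=\frac{1-\beta_k}{2}$, together with the lower bound $\beta_k\ge\sqrt{2/(\pi(k+1))}$ and $\frac{1-x}{1+x}\le e^{-2x}$. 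Likewise you need the exact ratio $\rho=1-\frac{2k\beta_k}{(k+2)(1-\beta_k)}$, because the hazard bound through $F\le\frac12$ fails for small $k$ (e.g.\ $k=6$). The log-concavity you invoke should also be proved or cited; the paper shows directly that $F(j-1)/F(j)$ is nondecreasing in $j$. With these repairs your argument becomes the paper's proof.
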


We will need tail bounds when adding binomial noise $S_{\delta n}$ to a vector of given Euclidean norm. They follow from Lemma~\ref{lem:binomial}(d) above, and their proof can be found in Appendix~B.

\begin{lemma}\label{lem:noise}
Let $n\ge 2^{50}$ be a power of two, and let $Y$ be a random vector with i.i.d. coordinates distributed as $S_{\delta n}$. Then for every $z\in\R^n$ with $\|z\|_2^2=(1-\delta)n^2$,
\begin{enumerate}
\item[(a)] We have $\Pp[\|z+Y\|_\infty\le (1+\delta)\sqrt n]\le 2^{-n}2^{n/320}$.
\item[(b)] If in addition $\|z\|_1\le 0.99n^{3/2}$, the upper bound may be improved to $2^{-3n}$.
\end{enumerate}
\end{lemma}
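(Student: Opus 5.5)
Since the coordinates $Y_i$ are independent, the event $\|z+Y\|_\infty\le(1+\delta)\sqrt n$ factors as $\prod_{i=1}^n \Pp[|z_i+Y_i|\le(1+\delta)\sqrt n]$. I will bound each factor by the one-sided estimate $\Pp[|z_i+Y_i|\le (1+\delta)\sqrt n]\le \Pp[|z_i|+\sigma_i Y_i\le(1+\delta)\sqrt n]$. Here $\sigma_i=\operatorname{sign}(z_i)$, and I use that $S_{\delta n}$ is symmetric. Normalize by writing $u_i:=|z_i|/\sqrt n\ge0$ and $k:=\delta n\ge 2^{28}\ge 2^{24}$, which is even. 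Then $Y_i/\sqrt n$ has the law of $\sqrt{\delta/k}\,S_k$, so Lemma~\ref{lem:binomial}(d) applies to each coordinate:
\[
p_i\le \tfrac12\exp\Bigl(4\sqrt\delta+\tfrac{1-u_i^2}{\sqrt{2\pi\delta}}\Bigr).
\]
The constraint $\|z\|_2^2=(1-\delta)n^2$ reads $\sum_i u_i^2=(1-\delta)n$.

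For (a), I will take the product. This gives
\[
\prod_i p_i\le 2^{-n}\exp\Bigl(4\sqrt\delta\, n+\tfrac{n-\sum u_i^2}{\sqrt{2\pi\delta}}\Bigr)=2^{-n}\exp\Bigl(4\sqrt\delta\,n+\tfrac{\delta n}{\sqrt{2\pi\delta}}\Bigr)=2^{-n}\exp\bigl(n\sqrt\delta(4+1/\sqrt{2\pi})\bigr).
\]
With $\sqrt\delta=2^{-11}$, the exponent is $n\cdot 4.4/2048\approx 0.00215n$. I must check that this is at most $(\ln 2/320)n\approx0.002166n$. It is tight, so I will verify the arithmetic carefully, possibly using a slightly sharper constant in place of $4\sqrt\delta$ if needed. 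Note also that when $u_i$ is large, the trivial bound $p_i\le1$ might be better, but the bound from (d) is already enough for the product.

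For (b), the same product bound is too weak. I need to exploit that $\|z\|_1\le0.99n^{3/2}$ forces $\sum_i u_i\le 0.99n$ while $\sum u_i^2\approx n$, so many coordinates deviate from $1$. Coordinates with $u_i>1+\delta$ are the key ones: for them, (d) gives the factor $\exp(-(u_i^2-1)/\sqrt{2\pi\delta})$, and $1/\sqrt{2\pi\delta}\approx 817$ is large. For the remaining coordinates I will use $\min(1,\cdot)$ or simply the bound from (d), which costs at most about $e^{1/\sqrt{2\pi\delta}}$ when $u_i$ is small. That is too costly, so instead, for coordinates with $u_i\le1$, I will use $p_i\le1$ and pay nothing. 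Then
\[
\prod p_i\le \prod_{u_i>1}\tfrac12 e^{4\sqrt\delta}\exp\Bigl(-\tfrac{u_i^2-1}{\sqrt{2\pi\delta}}\Bigr),
\]
and it suffices to show $\sum_{u_i>1}(u_i^2-1)\ge c n$ for some $c$ with $c\cdot 817\ge 3\ln2\cdot$ (a small multiple), roughly $c\ge 0.003$.

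For the lower bound on $\sum_{u_i>1}(u_i^2-1)$, I use
\[
\sum_i (u_i-1)^2=\sum u_i^2-2\sum u_i+n\ge (1-\delta)n-1.98n+n\approx 0.02n.
\]
For $u_i\le1$, we have $(u_i-1)^2\le 1-u_i$. Moreover, $\sum_i(u_i^2-1)=-\delta n$, so $\sum_{u_i>1}(u_i^2-1)=\sum_{u_i\le1}(1-u_i^2)-\delta n\ge \sum_{u_i\le 1}(1-u_i)-\delta n$. For $u_i>1$, we have $(u_i-1)^2\le u_i^2-1$. Combining these, $0.02n\le \sum_{u_i>1}(u_i^2-1)+\sum_{u_i\le1}(1-u_i)\le 2\sum_{u_i>1}(u_i^2-1)+\delta n$. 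This gives $\sum_{u_i>1}(u_i^2-1)\gtrsim 0.01n$, so the product is at most $\exp(-8n)\le 2^{-3n}$, with ample room for the $e^{4\sqrt\delta}$ factors.

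The main obstacle I expect is the tight constant in (a); part (b) has slack.
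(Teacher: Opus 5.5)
Your proposal is correct and takes essentially the same route as the paper. The sign flip $|z_i+Y_i|\ge|z_i|+\sigma_iY_i$ together with symmetry reduces each coordinate to Lemma~\ref{lem:binomial}(d); part (a) is the same product computation, and your arithmetic $(4+1/\sqrt{2\pi})\,2^{-11}\approx 0.002148<\ln 2/320\approx 0.002166$ closes with no sharper constant needed. Part (b) likewise keeps only the coordinates with $|z_i|>\sqrt n$ and bounds the others by $1$. The one difference is cosmetic: the paper gets $\sum_{u_i>1}(u_i^2-1)\ge\sum_i(u_i^2-u_i)\ge(0.01-\delta)n$ directly by a termwise comparison, while you reach the same bound by going through $\sum_i(u_i-1)^2$.
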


\subsection{Proof of Theorem~\ref{thm:main}}

\begin{proof}[Proof of Theorem~\ref{thm:main}]
Recall $A =\bigl[\,H\mid R\,\bigr]$, where $H$ contains the $(1-\delta)n$ unchanged columns of $H_0$ and $R$ has $\delta n$ columns of independent random signs.

For a sign vector $x=(u,v)\in\{-1,1\}^{(1-\delta)n} \times \{-1,1\}^{\delta n}$, let $E_x$ denote the (bad) event that $\|Ax\|_\infty \le (1+\delta) \sqrt{n}$. Note that $Ax = Hu + Rv$. We have $\|Hu\|_2^2 = u^\top H^\top Hu =(1-\delta)n^2$, and $Rv$ is distributed as $Y$ in Lemma~\ref{lem:noise}. Hence
\[
\Pp_R\bigl[E_x\bigr] = \Pp_R\bigl[\|Hu + Rv\|_\infty\le(1+\delta)\sqrt n\bigr] 
\le 2^{-n}2^{n/320},
\]
with the stronger bound of Lemma~\ref{lem:noise}(b), namely $\Pp_R\bigl[E_x\bigr]  \le 2^{-3n}$, in case $\|Hu\|_1\le0.99n^{3/2}$. We claim that in fact this holds for most $u$.

\textbf{Claim.} Let $\varepsilon \sim \{-1,1\}^{(1-\delta)n}$ uniformly at random.  Then $\Pp_\varepsilon\bigl[\|H\varepsilon\|_1>0.99n^{3/2}\bigr] < 2^{-n/310}$.

\emph{Proof of Claim.} Each coordinate of $H\varepsilon$ is distributed as $S_{(1-\delta)n}$. Lemma~\ref{lem:binomial}(a) therefore gives
\[
\E_\varepsilon[\|H\varepsilon\|_1]
=n\E[|S_{(1-\delta)n}|]
\le 0.8\sqrt{1-\delta}\,n^{3/2}.
\]

Let $K:=\{u\in\R^{(1-\delta)n}:\|Hu\|_1\le 0.8n^{3/2}\}$ be a closed and convex set. For every $u\in K$,
\[
\|H\varepsilon\|_1\le \|Hu\|_1+\|H(\varepsilon-u)\|_1\le 0.8n^{3/2}+\sqrt n\,\|H(\varepsilon-u)\|_2=0.8n^{3/2}+n\|\varepsilon-u\|_2,
\]
where we used the triangle inequality and Cauchy--Schwarz.
Thus $\|H\varepsilon\|_1 \le 0.8n^{3/2}+n\dist(\varepsilon,K)$.

Using Markov's inequality and Lemma~\ref{lem:talagrand}, we obtain
\[
\Pp_\varepsilon\bigl[\|H\varepsilon\|_1>0.99n^{3/2}\bigr]
\le \Pp_\varepsilon\bigl[\dist(\varepsilon,K)>0.19\sqrt n\bigr]
\le \frac{\E_\varepsilon\left[\exp\left(\frac{\dist(\varepsilon,K)^2}{16}\right)\right]}{\exp\left(\frac{(0.19)^2n}{16}\right)}
\le \frac{\exp\left(-\frac{(0.19)^2n}{16}\right)}{\Pp_\varepsilon[\varepsilon\in K]}.
\]

Again by Markov's inequality,
\begin{align*}
\Pp[\varepsilon\in K]
&=1-\Pp\bigl[\|H\varepsilon\|_1>0.8n^{3/2}\bigr] \ge 1-\frac{\E_\varepsilon[\|H\varepsilon\|_1]}{0.8n^{3/2}} \ge 1-\sqrt{1-\delta}=\frac{\delta}{1+\sqrt{1-\delta}} \ge \frac\delta2.
\end{align*}

Thus $\Pp_\varepsilon\bigl[\|H\varepsilon\|_1>0.99n^{3/2}\bigr] \le \frac{2}{\delta} \exp\left(-\frac{(0.19)^2n}{16}\right) < 2^{-n/310}$, as claimed.

We conclude that
\[
\begin{aligned}
\Pp_R\bigl[\disc(A)\le(1+\delta)\sqrt n\bigr]
&=\Pp_R\bigl[\exists x\in\{-1,1\}^n:E_x\bigr]\\
&\le \sum_{x\in\{-1,1\}^n}\Pp_R[E_x]\\
&=\sum_{\substack{x=(u,v)\in\{-1,1\}^n\\ \|Hu\|_1\le0.99n^{3/2}}}\underbrace{\Pp_R[E_x]}_{\le 2^{-3n}}
+\sum_{\substack{x=(u,v)\in\{-1,1\}^n\\ \|Hu\|_1>0.99n^{3/2}}} \ \ \underbrace{\Pp_R[E_x]}_{\le 2^{-n} 2^{n/320}}\\
&\le 2^n2^{-3n}+\bigl(2^n2^{-n/310}\bigr)\bigl(2^{-n}2^{n/320}\bigr)\\
&<2^{-n/20000},
\end{aligned}
\]
where we used Lemma~\ref{lem:noise}(b) for the first sum and Lemma~\ref{lem:noise}(a) for the second, which has at most $2^n2^{-n/310}$ summands by the claim. Thus $\Pp\bigl[\disc(A)>(1+\delta)\sqrt n\bigr]
\ge 1-2^{-n/20000}$.
\end{proof}

\section{Proof of the upper bound}
\label{sec:direct_potential_upper_bound}

We restate the upper bound for convenience.

\begin{theorem}
\label{thm:direct_potential_upper_bound}
For all $n,d \in \mathbb{N}$ and every matrix $A\in[-1,1]^{d\times n}$, there exists $x\in\{-1,1\}^n$ such that
\[
\|Ax\|_\infty\leq\sqrt{3n\operatorname{arsinh}(10d/n)}+4.
\]
In particular, $D_n\leq\sqrt{3\operatorname{arsinh}(10)}+4/\sqrt n$, which is less than 3 for $n\geq10^8$.
\end{theorem}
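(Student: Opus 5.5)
We follow the route sketched in the proof overview. Assume $n>4$; otherwise any $x$ works, since $\|Ax\|_\infty\le n\le 4$. Append the negated rows so that $\|Az\|_\infty=\max_{i\le 2d}\langle a_i,z\rangle$. Set $\Delta:=\sqrt{3n\operatorname{arsinh}(10d/n)}$, $R(z)=n-\|z\|_2^2$ and $s=R-4$. The proof has two parts: a \emph{rounding step} and an \emph{existence step}.

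\textbf{Rounding step.} Suppose $z\in[-1,1]^n$ with $R(z)\le 4$ and $\langle a_i,z\rangle\le\Delta$ for all $i$. Let $x$ be $\operatorname{sign}(z_j)$ coordinatewise. Since $|x_j-z_j|=1-|z_j|\le 1-z_j^2$,
\[
|\langle a_i,x-z\rangle|\le\sum_j(1-z_j^2)=R(z)\le 4,
\]
so $\|Ax\|_\infty\le\Delta+4$. It therefore suffices to find $z\in\pazocal K$ with $s(z)=0$.

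\textbf{Existence step.} Minimize $\Phi$ on $\pazocal K$. The set is compact, $\Phi$ is lower semicontinuous, and $\Phi(0)=\frac{1}{n-4}\cdot 2d\cdot 5/\sinh(\Delta^2/(3(n-4)))$. Since $\sinh(\Delta^2/(3n))=10d/n$, this gives $\Phi(0)\le\frac{n}{n-4}\cdot 2d\cdot\frac{5n}{10d\,n}\approx 1$; we need to check that it is $<1$ (tracking the factor $n/(n-4)$ against the strict growth of $\sinh$, or slightly adjusting). Hence a minimizer $z^*$ exists with $\Phi(z^*)<1$. Boundary points with $\langle a_i,z\rangle=\Delta$ and $s>0$ have $\Phi=\infty$, so they are not minimizers. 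If $s(z^*)=0$ we are done. Otherwise, suppose $s>0$; we derive a contradiction from second-order optimality.

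\textbf{Local perturbation.} Let $F$ be the set of coordinates with $|z_j|<1$. It has size at least $s+4$, since $R=\sum_j(1-z_j^2)$ counts only coordinates in $F$. Perturb $z\to z+\eta y$ with $y$ supported on $F$. We choose $y$ to be orthogonal to $z$ (so $s$ decreases exactly like $-\eta^2\|y\|^2$) and orthogonal to the gradients of the rows with largest weight $\phi'$, in the spirit of the partial-colouring/Lovett--Meka constraints. Let $t_i=(\Delta-\langle a_i,z\rangle)/\sqrt s$. Expanding $\Phi(z+\eta y)$ to second order gives contributions from three sources: $s$ decreasing (which increases both the $1/s$ prefactor and every $t_i$), the row terms $\phi'(t_i)\langle a_i,y\rangle/\sqrt s$, and $\phi''(t_i)\langle a_i,y\rangle^2/s$. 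Average over a random $y$, e.g.\ a Gaussian in the subspace of $F$ orthogonal to $z$ and to the heavy rows. By $|a_{ij}|\le1$, $\E\langle a_i,y\rangle^2\le|F|$ up to normalization. The linear term vanishes on average (or we use $\pm y$), so only second-order terms remain. We want to show
\[
\E\,\frac{d^2}{d\eta^2}\Phi(z+\eta y)\Big|_{\eta=0}<0,
\]
contradicting minimality. The decrease in $s$ contributes the negative term $\frac{d}{ds}\bigl[s^{-1}\phi(\cdot/\sqrt s)\bigr]\cdot(-\|y\|^2)$. This is the negative of $(\|y\|^2/s^2)\sum_i\bigl(\phi(t_i)+\tfrac{t_i}{2}\phi'(t_i)\bigr)$, and $\phi+\tfrac t2\phi'<0$ for the relevant $t$ since $\phi$ decays fast. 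The positive term is $\sum_i\phi''(t_i)\E\langle a_i,y\rangle^2/s$.

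\textbf{Main obstacle.} The main obstacle is the pointwise barrier inequality
\[
\phi''(t)\cdot\frac{|F|}{\|y\|^2}\le -\bigl(\phi(t)+\tfrac t2\phi'(t)\bigr)\cdot c
\]
for rows of moderate $t$, together with disposing of the heavy rows (small $t$, where $\phi\sim15/t^2$) by projecting them out. Only few rows can be heavy, since $\Phi<1$ forces $\sum_i\phi(t_i)<s$, which bounds the number of rows with $t_i\le t_0$. This is the source of the constants $3$, $5$, and the slack $4$ (which guarantees $|F|-\#\text{heavy}-1$ is comparable to $s$). I would first verify the one-variable inequality for $\phi=5/\sinh(t^2/3)$ by explicit differentiation. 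If it fails for small $t$, I would enlarge the set of projected rows using the counting bound from $\Phi<1$.
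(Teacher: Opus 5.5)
\paragraph{Verdict.} The proposal has a genuine gap: the step you flag as the main obstacle is not carried out, and the mechanism you sketch for it cannot be completed with these constants.

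\paragraph{What matches the paper.} Up to that step your outline is the paper's argument. You minimize $\Phi$ on $\pazocal K$ and take the second-order condition along $v\perp z$ supported on the free coordinates $F$. Correctly normalized, this reads $s^2\partial_\lambda^2\Phi(z+\lambda v)|_{\lambda=0}=\sum_i\phi''(t_i)\langle a_i,v\rangle^2-\|v\|_2^2\sum_i\psi(t_i)$ with $\psi:=-t\phi'-2\phi$. You then project out heavy rows, bound the other rows by the trace $\sum_\ell\langle a_i,u_\ell\rangle^2\le|F|$ (your Gaussian average), and use the budget $\sum_i\phi(t_i)=s\Phi(z)<s\le|F|-4$.

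\paragraph{Why a rule based on $t_i$ alone fails.} With $k\ge|F|-1-|H|$ surviving dimensions, a contradiction requires $|F|\sum_{i\notin H}\phi''(t_i)<k\sum_i\psi(t_i)$. Write $u=t^2/3$. Then $\phi''>\psi$ for $u$ up to about $2.2$; for example at $u=2$ one has $\phi''\approx3.28$, $\psi\approx2.96$, $\phi\approx1.38$. As $u\to\infty$, $\phi''/\psi\downarrow\frac23$ while $\psi/\phi\to\infty$. Suppose your heavy set is $\{i:t_i\le t_0\}$.
\begin{itemize}
\item If $t_0^2/3<2.2$: put every row at one gap slightly above $t_0$ with $u<2.2$. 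The requirement fails even with nothing projected.
\item If $t_0^2/3\ge2.2$: rows at $u=2$ are projected, and each costs only about $1.38$ of the budget, so about $0.72|F|$ of them fit. Heavy rows are not few. Take $0.7|F|$ such rows and spend the leftover budget (about $0.03|F|$) on rows at $u\approx40$. Then the left side is about $1.58|F|^2$ and the right side about $1.32|F|^2$. Only $k\approx0.3|F|<\frac23|F|$ dimensions remain, so the far rows' $\phi''$ beats their own $\psi$. Since $\psi/\phi$ is large there, this excess swamps the heavy rows' bounded $\psi$-contribution.
\end{itemize}
So no rule that decides projection from $t_i$ alone closes the argument. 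Your fallback, enlarging the projected set, only makes the second failure worse.

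\paragraph{The missing idea: an adaptive cap.} Set $L:=\sum_i\psi(t_i)$ and $\theta:=L/|F|$, and project exactly the rows with $\phi''(t_i)\ge\theta$. The trace argument then gives $\dim V\le\frac{|F|}{L}\sum_i\min\{\phi''(t_i),\theta\}$, where each projected row contributes exactly $1$. Combine this with the single scalar inequality $\min\{\phi''(t),\theta\}\le\frac34\psi(t)+\frac14\theta\phi(t)$ for all $t,\theta>0$. It is equivalent to $(1-\phi/4)\phi''\le\frac34\psi$, and holds for $\phi=5/\sinh(t^2/3)$ with under one percent to spare. Together they yield $|F|-1\le\dim V\le\frac34|F|+\frac14\sum_i\phi(t_i)<|F|-1$, a contradiction.

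The $\frac14\theta\phi$ term charges projections to the $\Phi<1$ budget. The slack $4$ makes $\frac14\sum_i\phi(t_i)<\frac14|F|-1$, which absorbs the $-1$ from $v\perp z$. In the configuration above, rows at $u=2$ have $\phi''\approx3.28<\theta\approx4.41$, so this rule keeps them and the contradiction goes through.

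\paragraph{A smaller point.} $\Phi(0)<1$ needs no adjustment. Your inequality $\phi+\frac t2\phi'<0$ holds for every $t>0$, since it is equivalent to $u\coth u>1$. Hence $\frac{d}{ds}[s^{-1}\phi(\Delta/\sqrt s)]=\psi(t)/(2s^2)>0$, and so $\Phi(0)=\frac{2d}{n-4}\phi(\Delta/\sqrt{n-4})<\frac{2d}{n}\phi(\Delta/\sqrt n)=1$.
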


For $z\in[-1,1]^n$, we define $R(z):=n-\|z\|_2^2$.

\begin{lemma}\label{lem:sinh_rounding}
Let $A\in[-1,1]^{d\times n}$ and $z\in[-1,1]^n$. Define its nearest-sign rounding $x\in\{-1,1\}^n$ by $x_j:=1$ when $z_j\geq0$, and $x_j:=-1$ otherwise. Then
\[
\|Ax\|_\infty\leq\|Az\|_\infty+R(z).
\]
\end{lemma}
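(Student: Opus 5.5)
The plan is to bound each row separately and reduce everything to a coordinatewise inequality. Fix a row $a\in[-1,1]^n$ of $A$. By the triangle inequality,
\[
|\langle a,x\rangle|\le|\langle a,z\rangle|+|\langle a,x-z\rangle|\le\|Az\|_\infty+\sum_{j=1}^n|a_j|\,|x_j-z_j|\le\|Az\|_\infty+\sum_{j=1}^n|x_j-z_j|,
\]
using $|a_j|\le1$. Taking the maximum over rows, it is enough to show that $\sum_j|x_j-z_j|\le R(z)=\sum_j(1-z_j^2)$. I will prove this term by term, i.e.\ show $|x_j-z_j|\le 1-z_j^2$ for each $j$.

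For the coordinatewise inequality, write $t:=|z_j|\in[0,1]$. The nearest-sign rounding has the same sign as $z_j$; when $z_j=0$ we set $x_j=1$, and then $|x_j-z_j|=1=1-t$. So in every case $|x_j-z_j|=1-t$. On the other hand, $1-z_j^2=(1-t)(1+t)\ge 1-t$, because $1+t\ge1$ and $1-t\ge0$. Summing over $j$ gives $\sum_j|x_j-z_j|\le R(z)$, which is the claim.

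I do not expect a real obstacle here. The one point to check is the tie $z_j=0$. There the chosen sign $x_j=1$ is at distance exactly $1=1-z_j^2$, so the inequality holds with equality. The argument uses nothing beyond $|a_{ij}|\le1$ and $z\in[-1,1]^n$.
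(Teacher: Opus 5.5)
Your proof is correct and follows essentially the same route as the paper: bound $\|A(x-z)\|_\infty \le \|x-z\|_1$ using $|A_{ij}|\le 1$, then compare coordinatewise $|x_j-z_j| = 1-|z_j| \le 1-z_j^2$. The paper phrases the last step as $z_j^2\le|z_j|$ instead of factoring $(1-t)(1+t)$, which is the same inequality.
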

\begin{proof}
Since $z_j^2\leq|z_j|$ for $z_j\in[-1,1]$,
\[
\|x-z\|_1=\sum_j(1-|z_j|)\leq\sum_j(1-z_j^2)=R(z).
\]
Every entry of $A$ has absolute value at most $1$, so $\|A(x-z)\|_\infty\leq\|x-z\|_1$. Hence
\[
\|Ax\|_\infty\leq\|Az\|_\infty+\|A(x-z)\|_\infty\leq\|Az\|_\infty+R(z).\qedhere
\]
\end{proof}

By Lemma~\ref{lem:sinh_rounding}, it suffices to find $z\in[-1,1]^n$ with $R(z)\leq4$ and
\begin{equation}
\|Az\|_\infty\leq \Delta:=\sqrt{3n\operatorname{arsinh}(10d/n)}.
\label{eq:sinh_target}
\end{equation}
For $n\leq4$, the point $z=0$ satisfies Eq.~\eqref{eq:sinh_target}. Henceforth assume $n>4$.

Without loss of generality, we may assume that each row of $A$ appears together with its negation: append the negation of every original row and retain the notation $A$ for the doubled $[-1,1]^{2d\times n}$ matrix. Thus $\|Az\|_\infty=\max_{i \in [2d]}\langle a_i,z\rangle$. Define $\phi:[0,\infty)\to\R\cup\{\infty\}$ by
\begin{equation}
\phi(t):=\begin{cases}
\displaystyle\frac5{\sinh(t^2/3)},&t>0,\\[4pt]
\infty,&t=0.
\end{cases}
\label{eq:sinh_profiles}
\end{equation}
We also use the limit convention $\phi(\infty):=0$. Assigning infinite potential at zero enforces a strictly positive row gap whenever $s(z)>0$. Figure~\ref{fig:sinh_profiles} shows $\phi$ for positive inputs.

\begin{figure}[H]
\centering
\definecolor{UBphi}{HTML}{2466A2}
\begin{minipage}[t]{0.65\linewidth}
\centering
\begin{tikzpicture}
\begin{axis}[
  width=0.97\linewidth,height=2.65in,
  xlabel={$t$},ylabel={$\phi(t)$},
  xmin=0,xmax=3,ymin=0,ymax=100,
  xtick={0,0.5,1,1.5,2,2.5,3},ytick={0,20,40,60,80,100},
  tick label style={font=\footnotesize},label style={font=\small},
  axis line style={black!55},tick style={black!55},
  grid=major,major grid style={black!10},
  minor tick num=0,
  clip=true,
]
\addplot[UBphi,line width=1.1pt] coordinates {
(0.25000000,239.982639768)
(0.26375000,215.609257633)
(0.27750000,194.767994998)
(0.29125000,176.807841573)
(0.30500000,161.22113924)
(0.31875000,147.607305918)
(0.33250000,135.64683451)
(0.34625000,125.082370514)
(0.36000000,115.704748577)
(0.37375000,107.342556497)
(0.38750000,99.8542438455)
(0.40125000,93.1220900493)
(0.41500000,87.04754754)
(0.42875000,81.5476130473)
(0.44250000,76.5519755696)
(0.45625000,72.000756683)
(0.47000000,67.8427066531)
(0.48375000,64.033754226)
(0.49750000,60.5358330093)
(0.51125000,57.3159257604)
(0.52500000,54.345281538)
(0.53875000,51.598770887)
(0.55250000,49.0543519252)
(0.56625000,46.6926260586)
(0.58000000,44.4964665312)
(0.59375000,42.4507064772)
(0.60750000,40.5418758272)
(0.62125000,38.7579785144)
(0.63500000,37.0883030787)
(0.64875000,35.5232610661)
(0.66250000,34.0542486589)
(0.67625000,32.6735277949)
(0.69000000,31.3741237023)
(0.70375000,30.1497363092)
(0.71750000,28.9946634214)
(0.73125000,27.9037339152)
(0.74500000,26.8722494807)
(0.75875000,25.8959336871)
(0.77250000,24.9708873367)
(0.78625000,24.0935492364)
(0.80000000,23.2606616477)
(0.81375000,22.4692397875)
(0.82750000,21.7165448481)
(0.84125000,21.0000600771)
(0.85500000,20.3174695289)
(0.86875000,19.6666391516)
(0.88250000,19.0455999202)
(0.89625000,18.4525327673)
(0.91000000,17.8857550955)
(0.92375000,17.3437086846)
(0.93750000,16.8249488311)
(0.95125000,16.328134579)
(0.96500000,15.8520199176)
(0.97875000,15.3954458389)
(0.99250000,14.95733316)
(1.00625000,14.5366760277)
(1.02000000,14.1325360308)
(1.03375000,13.7440368587)
(1.04750000,13.3703594455)
(1.06125000,13.0107375535)
(1.07500000,12.6644537486)
(1.08875000,12.3308357296)
(1.10250000,12.0092529754)
(1.11625000,11.6991136808)
(1.13000000,11.3998619501)
(1.14375000,11.1109752268)
(1.15750000,10.8319619347)
(1.17125000,10.5623593126)
(1.18500000,10.3017314234)
(1.19875000,10.0496673218)
(1.21250000,9.80577936727)
(1.22625000,9.56970166833)
(1.24000000,9.34108864673)
(1.25375000,9.11961371152)
(1.26750000,8.90496803305)
(1.28125000,8.69685940859)
(1.29500000,8.4950112117)
(1.30875000,8.29916141849)
(1.32250000,8.10906170424)
(1.33625000,7.92447660471)
(1.35000000,7.74518273693)
(1.36375000,7.5709680745)
(1.37750000,7.40163127333)
(1.39125000,7.2369810436)
(1.40500000,7.07683556452)
(1.41875000,6.92102193842)
(1.43250000,6.76937568134)
(1.44625000,6.62174024715)
(1.46000000,6.47796658283)
(1.47375000,6.33791271257)
(1.48750000,6.20144334845)
(1.50125000,6.06842952594)
(1.51500000,5.9387482622)
(1.52875000,5.81228223581)
(1.54250000,5.6889194861)
(1.55625000,5.56855313093)
(1.57000000,5.4510811016)
(1.58375000,5.33640589349)
(1.59750000,5.2244343317)
(1.61125000,5.11507735029)
(1.62500000,5.00824978452)
(1.63875000,4.90387017503)
(1.65250000,4.80186058316)
(1.66625000,4.70214641683)
(1.68000000,4.6046562661)
(1.69375000,4.50932174791)
(1.70750000,4.4160773594)
(1.72125000,4.32486033913)
(1.73500000,4.2356105359)
(1.74875000,4.14827028455)
(1.76250000,4.06278428832)
(1.77625000,3.97909950749)
(1.79000000,3.89716505365)
(1.80375000,3.81693208961)
(1.81750000,3.73835373432)
(1.83125000,3.6613849726)
(1.84500000,3.58598256947)
(1.85875000,3.51210498868)
(1.87250000,3.43971231532)
(1.88625000,3.36876618213)
(1.90000000,3.2992296995)
(1.91375000,3.23106738873)
(1.92750000,3.16424511855)
(1.94125000,3.09873004456)
(1.95500000,3.03449055165)
(1.96875000,2.97149619894)
(1.98250000,2.90971766742)
(1.99625000,2.84912670986)
(2.01000000,2.78969610312)
(2.02375000,2.73139960258)
(2.03750000,2.67421189855)
(2.05125000,2.61810857479)
(2.06500000,2.56306606873)
(2.07875000,2.50906163353)
(2.09250000,2.45607330182)
(2.10625000,2.40407985097)
(2.12000000,2.35306076997)
(2.13375000,2.30299622768)
(2.14750000,2.2538670425)
(2.16125000,2.20565465334)
(2.17500000,2.15834109188)
(2.18875000,2.11190895599)
(2.20250000,2.06634138433)
(2.21625000,2.02162203203)
(2.23000000,1.97773504744)
(2.24375000,1.93466504986)
(2.25750000,1.89239710828)
(2.27125000,1.85091672103)
(2.28500000,1.81020979625)
(2.29875000,1.77026263335)
(2.31250000,1.73106190516)
(2.32625000,1.69259464093)
(2.34000000,1.65484821008)
(2.35375000,1.61781030664)
(2.36750000,1.58146893447)
(2.38125000,1.54581239302)
(2.39500000,1.5108292639)
(2.40875000,1.47650839791)
(2.42250000,1.44283890279)
(2.43625000,1.40981013148)
(2.45000000,1.37741167095)
(2.46375000,1.34563333156)
(2.47750000,1.31446513697)
(2.49125000,1.28389731447)
(2.50500000,1.25392028587)
(2.51875000,1.22452465877)
(2.53250000,1.19570121833)
(2.54625000,1.16744091946)
(2.56000000,1.13973487934)
(2.57375000,1.11257437047)
(2.58750000,1.08595081393)
(2.60125000,1.0598557732)
(2.61500000,1.03428094811)
(2.62875000,1.0092181693)
(2.64250000,0.984659392929)
(2.65625000,0.960596695677)
(2.67000000,0.937022270075)
(2.68375000,0.913928420101)
(2.69750000,0.891307557055)
(2.71125000,0.869152195684)
(2.72500000,0.847454950562)
(2.73875000,0.826208532701)
(2.75250000,0.805405746392)
(2.76625000,0.785039486256)
(2.78000000,0.765102734506)
(2.79375000,0.745588558401)
(2.80750000,0.72649010788)
(2.82125000,0.707800613384)
(2.83500000,0.689513383833)
(2.84875000,0.671621804768)
(2.86250000,0.654119336641)
(2.87625000,0.636999513242)
(2.89000000,0.620255940267)
(2.90375000,0.603882294005)
(2.91750000,0.587872320144)
(2.93125000,0.572219832689)
(2.94500000,0.556918712984)
(2.95875000,0.541962908824)
(2.97250000,0.52734643367)
(2.98625000,0.513063365932)
(3.00000000,0.499107848344)
};
\addplot[black!45,densely dashed] coordinates {(0,0.5) (2.9991113435,0.5) (2.9991113435,0)};
\addplot[only marks,mark=*,mark size=2pt,UBphi] coordinates {(2.9991113435,0.5)};
\node[anchor=south east,font=\footnotesize,text=UBphi,inner sep=3pt]
  at (axis cs:2.9991113435,0.65) {$\phi(c)=\tfrac12$};
\end{axis}
\end{tikzpicture}
\end{minipage}
\caption{The potential $\phi$ on the input range $[0,3]$. The curve is clipped at the top: $\phi(t)\sim15/t^2$ as $t\to 0$.
The marked threshold is $c=\sqrt{3\operatorname{arsinh}(10)}\approx2.99911$, where $\phi(c)=1/2$.}
\label{fig:sinh_profiles}
\end{figure}

Set $s(z):=R(z)-4$ and define the compact minimization domain
\begin{equation}
\pazocal K:=\{z\in[-1,1]^n:R(z)\geq4,\ \langle a_i,z\rangle\leq\Delta\text{ for every }i\in[2d]\}.
\label{eq:sinh_domain}
\end{equation}
Define a potential on $\pazocal K$ as
\begin{equation}
\Phi(z):=\begin{cases}
\displaystyle\frac1{s(z)}\sum_{i=1}^{2d}\phi\Bigl(\frac{\Delta-\langle a_i,z\rangle}{\sqrt{s(z)}}\Bigr),&s(z)>0,\\[6pt]
0,&s(z)=0.
\end{cases}
\label{eq:sinh_potential}
\end{equation}
Proposition~\ref{prop:sinh_fractional} proves that $\Phi$ attains a minimum on $\pazocal K$ and that every minimizer has $R=4$.

\paragraph{Conceptual pseudocode.} For simplicity, we do not provide an algorithm to minimize $\Phi$, so our result is existential instead of polynomial time; we believe an efficient implementation is possible.
\begin{algorithm}[htbp]
\caption{Reciprocal-sinh minimization with nearest-sign rounding}
\label{alg:direct_potential_signing}
\begin{algorithmic}[1]
\Procedure{SignMatrix}{$A\in[-1,1]^{d\times n}$}
\State \textbf{if} $n\leq4$ \textbf{then} \Return $(1,\ldots,1)$
\State Define $\Delta:=\sqrt{3n\operatorname{arsinh}(10d/n)}$
\State Append the negation of every row to $A$; denote its rows by $a_1,\ldots,a_{2d}$
\State Define $\phi(t):=5/\sinh(t^2/3)$ for $t>0$ and $\phi(0):=\infty$
\State Define $R(z):=n-\|z\|_2^2$ and $s(z):=R(z)-4$
\State Define $\pazocal K:=\{z\in[-1,1]^n:R(z)\geq4,\ \langle a_i,z\rangle\leq\Delta\text{ for every }i\in[2d]\}$
\State Define, for $z\in\pazocal K$,
\Statex $\displaystyle\hspace{\algorithmicindent}\Phi(z):=\begin{cases}
\displaystyle\frac1{s(z)}\sum_{i=1}^{2d}\phi\Bigl(\frac{\Delta-\langle a_i,z\rangle}{\sqrt{s(z)}}\Bigr),&s(z)>0,\\[6pt]
0,&s(z)=0.
\end{cases}$
\State Define $z^*:=\operatorname*{arg\,min}_{z\in\pazocal K}\Phi(z)$ (choose any minimizer)
\For{$j=1,\ldots,n$}
\State Define $x_j:=1$ if $z_j^*\geq0$, and $-1$ otherwise
\EndFor
\State \Return $x$
\EndProcedure
\end{algorithmic}
\end{algorithm}

\label{sec:direct_potential_preliminaries}

We first state the scalar properties and then prove the two geometric ingredients: a capped trace bound and a second derivative formula.

\subsection{Scalar inequalities}
\label{sec:sinh_scalar_input}

For $t>0$, define
\[
\psi(t):=-t\phi'(t)-2\phi(t).
\]

The following lemma records the scalar properties of $\phi$ and $\psi$ needed. The inequality in part~(c) will be used in the dimension count, while the monotonicity in part~(d) bounds the potential at the origin.

\begin{lemma}[Scalar estimates]
\label{lem:sinh_scalar}
On positive arguments, the functions $\phi$ and $\psi$ have the following properties:
\begin{itemize}
\item[(a)] For every $t>0$, $\phi(t)>0$, $\phi'(t)<0$, and $\phi''(t)>0$. Also, $\phi(c)=1/2$ and $\phi(t)\longrightarrow\infty$ as $t\downarrow0$.
\item[(b)] For every $t>0$, $\psi(t)>0$.
\item[(c)] For every $t,\theta>0$, $\min\{\phi''(t),\theta\}\leq\frac34\psi(t)+\frac14\theta\phi(t)$.
\item[(d)] For every fixed $\Delta>0$, the function $s\mapsto s^{-1}\phi\Bigl(\frac{\Delta}{\sqrt s}\Bigr)$ is strictly increasing on $(0,\infty)$.
\end{itemize}
\end{lemma}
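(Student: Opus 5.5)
The plan is to change variables to $u:=t^2/3$, which turns $\phi$ into the elementary function $5/\sinh u$, and then to treat each part as a one-variable inequality in $u>0$.

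\textbf{Derivatives.} With $u=t^2/3$ we have $du/dt=2t/3$, so
\[
\phi'(t)=-\tfrac{10t}{3}\,\frac{\cosh u}{\sinh^2 u},
\qquad
t\phi'(t)=-10u\,\frac{\cosh u}{\sinh^2 u}.
\]
Differentiating once more gives
\[
\phi''(t)=-\tfrac{10}{3}\,\frac{\cosh u}{\sinh^2 u}+\tfrac{20u}{3}\,\frac{\sinh^2 u+2}{\sinh^3 u}.
\]
Here I use $\frac{d}{du}\bigl(\cosh u/\sinh^2 u\bigr)=-(\sinh^2u+2)/\sinh^3 u$.

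\textbf{Part (a).} Positivity of $\phi$ and negativity of $\phi'$ can be read off directly. The value $\phi(c)=1/2$ is equivalent to $\sinh(c^2/3)=10$, which holds by the definition of $c$. The blow-up $\phi(t)\to\infty$ as $t\downarrow0$ follows from $\sinh u\to0$. For convexity, multiply $\phi''$ by $3\sinh^3u/10$; the claim becomes
\[
2u(\sinh^2u+2)>\sinh u\cosh u .
\]
This holds because $\sinh u\cosh u=\tfrac12\sinh 2u$, and comparing Taylor series in $u$ (all coefficients nonnegative) gives $2u\sinh^2u+4u\ge 4u>\ldots$ for small $u$. More cleanly, I will use $\sinh u\cosh u\le u\cosh^2u=u(1+\sinh^2u)$, which follows from $\tanh u\le u$. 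This bound is at most $2u(\sinh^2u+2)$.

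\textbf{Part (b).} We have
\[
\psi=\frac{10u\cosh u}{\sinh^2u}-\frac{10}{\sinh u}=\frac{10}{\sinh^2u}\,(u\cosh u-\sinh u).
\]
This is positive since $\tanh u<u$.

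\textbf{Part (d).} Write $u=\Delta^2/(3s)$, so $s^{-1}=3u/\Delta^2$. The function in question is then a constant times $u/\sinh u$. Since $u/\sinh u$ is strictly decreasing in $u$ and $u$ is decreasing in $s$, the function is strictly increasing in $s$. To confirm that $u/\sinh u$ is decreasing, note its derivative has the sign of $\sinh u-u\cosh u<0$.

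\textbf{Part (c), the main obstacle.} If $\theta\le\phi''$, it suffices to show $\theta\le\tfrac34\psi+\tfrac14\theta\phi$. If $\theta\ge\phi''$, it suffices to show $\phi''\le\tfrac34\psi+\tfrac14\theta\phi$. The right side is increasing in $\theta$ because $\phi>0$.

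So the worst case is $\theta=\phi''$, where the two requirements coincide. The inequality therefore reduces to checking the single case
\[
\phi''\bigl(1-\tfrac14\phi\bigr)\le\tfrac34\psi .
\]
A caveat applies in the first case when $\phi\ge4$: then the coefficient of $\theta$ on the right is at least $1$, and the inequality holds for all $\theta$ since $\psi>0$.

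\emph{Case $\phi\ge4$.} Here $\min\{\phi'',\theta\}\le\theta\le\tfrac14\theta\phi<\tfrac34\psi+\tfrac14\theta\phi$, so nothing more is needed. This covers $\sinh u\le 5/4$.

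\emph{Case $\sinh u>5/4$.} It remains to verify the explicit inequality
\[
\Bigl(\tfrac{20u(\sinh^2u+2)}{3\sinh^3u}-\tfrac{10\cosh u}{3\sinh^2u}\Bigr)\Bigl(1-\tfrac{5}{4\sinh u}\Bigr)\le\tfrac{15}{2}\,\frac{u\cosh u-\sinh u}{\sinh^2u}.
\]
I expect this step to need the most work. For large $u$, the left side behaves like $\tfrac{40}{3}u e^{-u}$ and the right like $15u e^{-u}$, so the inequality holds asymptotically with some room. On the bounded remaining range, starting from $u=\operatorname{arsinh}(5/4)\approx1.05$, I would clear denominators and compare via elementary bounds: $\cosh u\ge\sinh u$, and $\sinh^2u+2\le \cosh^2u+1$. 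If those are not sharp enough, I would fall back on a monotonicity argument or an interval check on a compact interval, with an asymptotic argument handling the tail.
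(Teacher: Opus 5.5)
Your proposal follows the paper's proof essentially step for step. It uses the same substitution $u=t^2/3$ and the same derivative formulas, and reduces (c) in the same way (the case $\phi\ge4$ is trivial, otherwise the worst case is $\theta=\phi''$) to the single comparison $(1-\phi/4)\phi''\le\frac34\psi$, whose verification the paper likewise omits; the only differences are cosmetic variants for $\phi''>0$ (via $\tanh u\le u$ rather than differentiating $2u(\sinh^2u+2)-\sinh u\cosh u$) and for (d) (via monotonicity of $u/\sinh u$ rather than the derivative $\psi(t)/(2s^2)$).

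If you do carry out that comparison, the margin is thin: the ratio of left to right side peaks at about $0.992$ near $u\approx3.7$. Also, the crude bound $\cosh u\ge\sinh u$ inside $u\cosh u-\sinh u$ already breaks the inequality for $u$ between roughly $1.1$ and $2.5$, so your fallback interval or monotonicity check would genuinely be needed.
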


These are the only scalar facts used in the proof. Appendix~\ref{sec:sinh_scalar_reductions} gives the elementary calculations and the reduction of part~(c) to one scalar comparison.

\subsection{Capped trace bound}

The next lemma converts a lower bound on a weighted quadratic form into an upper bound on the dimension of the underlying subspace. Capped weights will allow us to apply Lemma~\ref{lem:sinh_scalar}(c) later.

\begin{lemma}[Capped trace]
\label{lem:sinh_capped_trace}
Let $a_1,\ldots,a_m\in[-1,1]^n$ be the rows of a matrix $A$ and let $V \subseteq \mathbb{R}^n$ be a subspace supported on $J\subseteq\{1,\ldots,n\}$. Suppose $w_1,\ldots, w_m\geq0$ and $L > 0$ satisfy
\begin{equation}
\sum_{i=1}^m w_i\langle a_i,v\rangle^2\geq L\|v\|_2^2\qquad \forall v\in V.
\label{eq:sinh_lower_curvature}
\end{equation}
Then
\begin{equation*}
\dim V  \le \frac{|J|}{L} \sum_{i=1}^m\min\Big\{w_i,\frac{L}{|J|}\Big\}.
\end{equation*}
\end{lemma}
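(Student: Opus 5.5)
We prove the bound by a trace argument: choose an orthonormal basis of $V$, sum the curvature lower bound over the basis, and control each row's contribution in two ways, one that is always available and one that uses the boundedness of the entries. Let $k:=\dim V$ and let $P$ be the orthogonal projection onto $V$. Since $V$ is supported on $J$, we have $P=P_J P P_J$, where $P_J$ is the coordinate projection onto $J$.

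Take an orthonormal basis $v_1,\ldots,v_k$ of $V$. Applying \eqref{eq:sinh_lower_curvature} to each $v_\ell$ and summing gives
\[
kL\le\sum_{i=1}^m w_i\sum_{\ell=1}^k\langle a_i,v_\ell\rangle^2=\sum_{i=1}^m w_i\,\|Pa_i\|_2^2 .
\]
For each row we have two bounds on $\|Pa_i\|_2^2$.
\begin{itemize}
\item \emph{Trivial bound.} Since $P=PP_J$, we have $\|Pa_i\|_2^2\le\|P_Ja_i\|_2^2\le|J|$, because the entries of $a_i$ lie in $[-1,1]$.
\item \emph{Rayleigh-quotient bound.} Since $Pa_i\in V$, applying \eqref{eq:sinh_lower_curvature} to $v=Pa_i$ gives $w_i\|Pa_i\|_2^2\le w_i\|Pa_i\|_2^2\le \sum_j w_j\langle a_j,Pa_i\rangle^2$. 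This is not directly helpful for a single row.
\end{itemize}
So a cleaner split is needed.

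Split the rows into the heavy set $H:=\{i:w_i>L/|J|\}$ and the light set $H^c$. For light rows, $w_i\|Pa_i\|^2_2\le \min\{w_i,L/|J|\}\cdot\|Pa_i\|_2^2$, and we keep these terms. For heavy rows we want to show that they effectively contribute at most $L/|J|\cdot|J|=L$ per dimension they "occupy". Set $U:=\operatorname{span}\{Pa_i:i\in H\}$ and $W:=V\cap U^\perp$. For $v\in W$, every heavy term $\langle a_i,v\rangle=\langle Pa_i,v\rangle$ vanishes. Applying \eqref{eq:sinh_lower_curvature} on $W$ and taking the trace over an orthonormal basis of $W$ then gives
\[
(\dim W)\,L\le\sum_{i\notin H}w_i\|P_Wa_i\|_2^2\le\frac{L}{|J|}\sum_{i\notin H}|J|=\frac{L}{|J|}\sum_{i\notin H}|J| .
\]
This yields $\dim W\le \frac{|J|}{L}\cdot\frac{L}{|J|}\cdot\frac{1}{1}\sum_{i\notin H}\frac{\|P_Wa_i\|_2^2}{|J|}\cdot |J|\cdot\frac{w_i}{L/|J|}\cdot\frac{1}{|J|}$; more plainly, we use the trivial bound $\|P_Wa_i\|_2^2\le|J|$ together with $w_i\le L/|J|$ on light rows. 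This gives
\[
\dim W\le\frac{1}{L}\sum_{i\notin H}w_i|J|=\frac{|J|}{L}\sum_{i\notin H}\min\Bigl\{w_i,\frac{L}{|J|}\Bigr\}.
\]
Meanwhile $\dim U\le|H|=\frac{|J|}{L}\sum_{i\in H}\min\{w_i,L/|J|\}$. Since $k=\dim V\le\dim W+\dim U$, adding the two bounds gives exactly the claimed inequality.

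The key step, and the only potential obstacle, is recognizing that heavy rows should be handled by a dimension count (each heavy row kills at most one dimension) rather than by the trace. Once one restricts to $W=V\cap U^\perp$, the remaining computation is the routine trace bound using only $|a_{ij}|\le1$ on $J$. One minor check is that $\langle a_i,v\rangle=\langle Pa_i,v\rangle$ for $v\in V$, which holds because $P$ is self-adjoint and fixes $V$.
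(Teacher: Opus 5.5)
Your argument is correct and is essentially the paper's proof. It uses the same heavy/light split at threshold $L/|J|$ and the same subspace $W$ of vectors in $V$ orthogonal to the heavy rows; your $V\cap U^\perp$ with $U=\operatorname{span}\{Pa_i: i\in H\}$ is exactly this. It then takes the trace over an orthonormal basis of $W$ using $|A_{ij}|\le 1$ on $J$, and finishes with the count $\dim V\le \dim W+|H|$.

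For a clean write-up, drop the abandoned Rayleigh-quotient bullet and the garbled intermediate display. State the trace step directly as $(\dim W)L\le\sum_{i\notin H}w_i\|P_Wa_i\|_2^2\le |J|\sum_{i\notin H}w_i$, and invoke $w_i\le L/|J|$ only to identify $w_i$ with $\min\{w_i,L/|J|\}$ on light rows.
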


\begin{proof}
Let $H:=\Big\{i \in [m] :w_i\geq \frac{L}{|J|}\Big\}$ and
\begin{equation*}
W:=\{v\in V:\langle a_i,v\rangle=0\text{ for every }i\in H\}.
\end{equation*}
Each orthogonality equation removes at most one dimension, so $k:=\dim W\geq\dim V- |H|$. Choose an orthonormal basis $u_1,\ldots,u_k$ of $W$. We have
\begin{equation}
kL\leq\sum_{i\notin H}w_i\sum_{\ell=1}^k\langle a_i,u_\ell\rangle^2
\leq\sum_{i\notin H}w_i\sum_{j\in J}A_{ij}^2
\leq |J| \sum_{i\notin H}w_i,
\label{eq:sinh_trace_sum}
\end{equation}
where the first step sums Eq.~\eqref{eq:sinh_lower_curvature} over the basis and uses orthogonality to the rows in $H$, the second step uses that $W \subseteq V$ is supported on $J$, and the third step uses $|A_{ij}|\leq1$. It follows that
\begin{equation*}
\sum_{i=1}^m\min\Big\{w_i,\frac{L}{|J|}\Big\} 
=\frac{|H| \cdot L}{|J|} + \sum_{i\notin H}w_i
\geq\frac{(|H|+k)L}{|J|} \geq\frac{L}{|J|} \dim V,
\end{equation*}
where the first inequality uses Eq.~\eqref{eq:sinh_trace_sum} and the last uses $|H|+k\geq\dim V$. Rearranging gives the lemma.
\end{proof}

\subsection{Second derivative formula}

Define
\begin{equation*}
\pazocal U:=\{z\in\R^n:s(z)>0,\ \langle a_i,z\rangle<\Delta\text{ for every }i\in[m]\}.
\end{equation*}

We compute the second derivative of the potential along directions orthogonal to $z$. At a minimizer, this derivative is nonnegative, giving the quadratic inequality required by Lemma~\ref{lem:sinh_capped_trace}.

\begin{lemma}
\label{lem:sinh_hessian}
Let $\Phi$ be given by the smooth formula in Eq.~\eqref{eq:sinh_potential}, with the sum taken over any rows $a_1,\ldots,a_m$ and $\Delta>0$. Fix $z\in\pazocal U$ and write $s:=s(z)= n-4-\|z\|_2^2$. For every $v\perp z$,
\begin{equation}
\begin{aligned}
s^2\frac{\partial^2}{\partial\lambda^2}\Phi(z+\lambda v)\big|_{\lambda=0}
={}&\sum_{i=1}^m\phi''\Bigl(\frac{\Delta-\langle a_i,z\rangle}{\sqrt s}\Bigr)\langle a_i,v\rangle^2-\sum_{i=1}^m\psi\Bigl(\frac{\Delta-\langle a_i,z\rangle}{\sqrt s}\Bigr)\|v\|_2^2.
\end{aligned}
\label{eq:sinh_hessian}
\end{equation}
\end{lemma}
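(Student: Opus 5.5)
This is a direct computation. Write $\lambda$ for the line parameter and set $z_\lambda:=z+\lambda v$. Put $s(\lambda):=s(z_\lambda)=n-4-\|z+\lambda v\|_2^2$. Because $v\perp z$, this gives $s(\lambda)=s-\lambda^2\|v\|_2^2$. Consequently $s'(0)=0$ and $s''(0)=-2\|v\|_2^2$, and this orthogonality is what makes the formula clean. For each row set $g_i(\lambda):=\Delta-\langle a_i,z\rangle-\lambda\langle a_i,v\rangle$, which is affine with $g_i'=-\langle a_i,v\rangle$. Also set $t_i(\lambda):=g_i(\lambda)/\sqrt{s(\lambda)}$. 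Since $z\in\pazocal U$, all $t_i(0)>0$ and $s>0$, so $\Phi(z_\lambda)=s(\lambda)^{-1}\sum_i\phi(t_i(\lambda))$ is smooth near $\lambda=0$. It therefore suffices to differentiate one summand $F_i(\lambda):=s(\lambda)^{-1}\phi(t_i(\lambda))$ twice at $0$.

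First compute the derivatives of $t_i$ at $0$, abbreviating $t:=t_i(0)$. Using $s'(0)=0$ we get $t_i'(0)=-\langle a_i,v\rangle/\sqrt s$. For the second derivative, $g_i''=0$ and $s'(0)=0$ leave only the term from $(s^{-1/2})''$. Since $(s^{-1/2})''=-\tfrac12 s^{-3/2}s''$ at $0$, this term is $g_i\cdot\tfrac12 s^{-3/2}\cdot 2\|v\|_2^2$. Hence $t_i''(0)=t\|v\|_2^2/s$.

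Next apply the product rule to $F_i=s^{-1}\cdot\phi(t_i)$:
\[
F_i''(0)=(s^{-1})''\phi(t)+2(s^{-1})'\,\phi'(t)t_i'(0)+s^{-1}\bigl(\phi''(t)t_i'(0)^2+\phi'(t)t_i''(0)\bigr).
\]
The middle term vanishes because $(s^{-1})'(0)=-s'(0)/s^2=0$. The first factor is $(s^{-1})''(0)=-s''(0)/s^2=2\|v\|_2^2/s^2$. Substituting everything gives
\[
s^2F_i''(0)=2\|v\|_2^2\phi(t)+\phi''(t)\langle a_i,v\rangle^2+t\phi'(t)\|v\|_2^2.
\]
By the definition $\psi(t)=-t\phi'(t)-2\phi(t)$, this equals $\phi''(t)\langle a_i,v\rangle^2-\psi(t)\|v\|_2^2$. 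Summing over $i$ yields \eqref{eq:sinh_hessian}.

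There is no real obstacle. The only care needed is bookkeeping the $s(\lambda)$-dependence both in the prefactor and inside $t_i$, and both of these contribute $\|v\|_2^2$ terms. It is also worth noting that $v\perp z$ is used exactly to kill every term involving $s'(0)$.
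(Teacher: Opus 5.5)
Your proposal is correct and follows essentially the same route as the paper: you use $v\perp z$ to get $s(z+\lambda v)=s-\lambda^2\|v\|_2^2$, compute the first two derivatives of $1/s$ and of the normalized gap at $\lambda=0$, combine them with the product and chain rules, and absorb $2\phi+t\phi'$ into $-\psi$. Your intermediate values $t_i'(0)=-\langle a_i,v\rangle/\sqrt s$, $t_i''(0)=t\|v\|_2^2/s$, and $(s^{-1})''(0)=2\|v\|_2^2/s^2$ match the paper's exactly.
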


\begin{proof}
{\bf Change in the residual scale.}
By the definition of $s$, we have
\begin{equation*}
s(z+\lambda v)=s-2\lambda\langle z,v\rangle-\lambda^2\|v\|_2^2=s-\lambda^2\|v\|_2^2.
\end{equation*}
Taking derivatives,
\begin{equation}
\frac{\partial}{\partial\lambda}\frac1{s(z+\lambda v)}\Big|_{\lambda=0}=0,
\qquad\frac{\partial^2}{\partial\lambda^2}\frac1{s(z+\lambda v)}\Big|_{\lambda=0}=\frac{2\|v\|_2^2}{s^2}.
\label{eq:sinh_inverse_scale}
\end{equation}

{\bf Change in the normalized gap.}
Along the same line,
\begin{equation}
\frac{\Delta-\langle a_i,z+\lambda v\rangle}{\sqrt{s(z+\lambda v)}}
=\frac{\Delta-\langle a_i,z\rangle-\lambda\langle a_i,v\rangle}{\sqrt{s-\lambda^2\|v\|_2^2}}.
\label{eq:sinh_normalized_path}
\end{equation}
Thus
\begin{align}
\frac{\partial}{\partial\lambda}\Bigl(\frac{\Delta-\langle a_i,z+\lambda v\rangle}{\sqrt{s(z+\lambda v)}}\Bigr)\Big|_{\lambda=0}
&=-\frac{\langle a_i,v\rangle}{\sqrt s},\label{eq:sinh_normalized_first}\\
\frac{\partial^2}{\partial\lambda^2}\Bigl(\frac{\Delta-\langle a_i,z+\lambda v\rangle}{\sqrt{s(z+\lambda v)}}\Bigr)\Big|_{\lambda=0}
&=\frac{(\Delta-\langle a_i,z\rangle)\|v\|_2^2}{s^{3/2}}.\label{eq:sinh_normalized_second}
\end{align}
Here the derivative of $(s-\lambda^2\|v\|_2^2)^{-1/2}$ vanishes at zero and its second derivative there is $\|v\|_2^2/s^{3/2}$. The numerator is affine in $\lambda$, so its second derivative is zero; the mixed product term vanishes as well.

{\bf Combining the derivatives.}
Define $t_i:=(\Delta-\langle a_i,z\rangle)/\sqrt s$. The product and chain rules give
\begin{align*}
\frac{\partial^2}{\partial\lambda^2}
\left[\frac1{s(z+\lambda v)}\phi\Bigl(\frac{\Delta-\langle a_i,z+\lambda v\rangle}{\sqrt{s(z+\lambda v)}}\Bigr)\right]\Big|_{\lambda=0}&=\frac{2\|v\|_2^2}{s^2}\phi(t_i) +\frac{\langle a_i,v\rangle^2}{s^2}\phi''(t_i) +\frac{t_i\|v\|_2^2}{s^2}\phi'(t_i)\\ &=\frac1{s^2}\left[\phi''(t_i)\langle a_i,v\rangle^2
-\psi(t_i)\|v\|_2^2\right].
\end{align*}
The first equality uses Eqs.~\eqref{eq:sinh_inverse_scale}--\eqref{eq:sinh_normalized_second}; the mixed product term is zero by Eq.~\eqref{eq:sinh_inverse_scale}. The last equality uses $2\phi(t_i)+t_i\phi'(t_i)=-\psi(t_i)$. Summing over all rows and multiplying by $s^2$ proves Eq.~\eqref{eq:sinh_hessian}.
\end{proof}

\subsection{Proof of Theorem~\ref{thm:upper_intro}}
\label{sec:direct_potential_proof}

We now combine the previous lemmas to show that every minimizer of $\Phi$ has $R(z) = 4$. 

\begin{proposition}
\label{prop:sinh_fractional}
Suppose $n>4$. The potential $\Phi$ attains a finite minimum on $\pazocal K$. Every minimizer $z$ satisfies $\|Az\|_\infty\leq\Delta$ and $R(z)=4$.
\end{proposition}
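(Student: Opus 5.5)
The proof has two parts. First I show that $\Phi$ attains a minimum whose value is below $1$. Then I rule out minimizers with $s(z)>0$ by a second-order argument, which combines Lemma~\ref{lem:sinh_hessian}, Lemma~\ref{lem:sinh_capped_trace} and Lemma~\ref{lem:sinh_scalar}(c) into a dimension count.

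\textbf{Existence and the bound $\Phi<1$.} $\pazocal K$ is compact and nonempty, since $0\in\pazocal K$ because $n>4$. I check that $\Phi:\pazocal K\to[0,\infty]$ is lower semicontinuous. On $\{s>0\}$ it is continuous as an extended-real function, using that $\phi$ is continuous on $[0,\infty)$ with $\phi(0)=\infty$. At points with $s=0$ the value is $0$, and since $\Phi\ge 0$ everywhere, lower semicontinuity there is trivial. Hence a minimizer exists.

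Next I bound the value at the origin, where $s(0)=n-4<n$ and all $2d$ gaps equal $\Delta$. By Lemma~\ref{lem:sinh_scalar}(d),
\[
\Phi(0)=\frac{2d}{n-4}\,\phi\Bigl(\frac{\Delta}{\sqrt{n-4}}\Bigr)<\frac{2d}{n}\,\phi\Bigl(\frac{\Delta}{\sqrt n}\Bigr)=\frac{2d}{n}\cdot\frac{5}{\sinh(\operatorname{arsinh}(10d/n))}=1 .
\]
So every minimizer $z$ satisfies $\Phi(z)<1$. In particular $\Phi(z)$ is finite, and $\|Az\|_\infty\le\Delta$ holds automatically because $z\in\pazocal K$.

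\textbf{Ruling out $s(z)>0$.} Suppose a minimizer $z$ has $s:=s(z)>0$. Finiteness of $\Phi(z)$ forces every gap $\Delta-\langle a_i,z\rangle$ to be positive, so $z\in\pazocal U$.

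Let $J:=\{j:|z_j|<1\}$. Then $s+4=\sum_j(1-z_j^2)\le|J|$, so $|J|>4$ and $s\le |J|-4$. Let $V:=\{v\in\R^n:\operatorname{supp}v\subseteq J,\ v\perp z\}$, which has $\dim V\ge|J|-1$.

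For $v\in V$ and small $|\lambda|$, the point $z+\lambda v$ stays in the cube. It also keeps $s>0$ and all gaps positive, so it stays in $\pazocal K\cap\pazocal U$, where $\Phi$ is smooth. Thus $\lambda=0$ is an interior local minimum of $\lambda\mapsto\Phi(z+\lambda v)$, and its second derivative there is nonnegative. Writing $t_i:=(\Delta-\langle a_i,z\rangle)/\sqrt s$, Lemma~\ref{lem:sinh_hessian} then gives
\[
\sum_i\phi''(t_i)\langle a_i,v\rangle^2\ge L\|v\|_2^2\quad\text{for all } v\in V,\qquad\text{where } L:=\sum_i\psi(t_i)>0
\]
by Lemma~\ref{lem:sinh_scalar}(b).

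\textbf{Dimension count.} I apply Lemma~\ref{lem:sinh_capped_trace} with $w_i=\phi''(t_i)$, which is nonnegative by Lemma~\ref{lem:sinh_scalar}(a). Then I use Lemma~\ref{lem:sinh_scalar}(c) with $\theta=L/|J|$:
\[
\dim V\le\frac{|J|}{L}\sum_i\Bigl(\tfrac34\psi(t_i)+\tfrac14\tfrac{L}{|J|}\phi(t_i)\Bigr)=\tfrac34|J|+\tfrac14\sum_i\phi(t_i)=\tfrac34|J|+\tfrac14 s\,\Phi(z).
\]
Using $\Phi(z)<1$ and $s\le|J|-4$, the right-hand side is less than $\tfrac34|J|+\tfrac14(|J|-4)=|J|-1\le\dim V$. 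This is a contradiction, so $s(z)=0$, that is, $R(z)=4$.

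The main obstacle I expect is the local-minimality step. I must make sure that perturbations along $V$ remain feasible and that $\Phi$ is $C^2$ near $z$. Both follow from the strict inequalities $|z_j|<1$ on $J$, $s>0$, and positive gaps. The perpendicularity $v\perp z$ is exactly what makes Lemma~\ref{lem:sinh_hessian} applicable.
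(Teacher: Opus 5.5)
Your proposal is correct and follows the paper's proof essentially step for step: the bound $\Phi(0)<1$ via Lemma~\ref{lem:sinh_scalar}(d), existence by compactness and lower semicontinuity, the second-order condition along $V$ (vectors $v\perp z$ supported on $J$) via Lemma~\ref{lem:sinh_hessian}, and the dimension count from Lemma~\ref{lem:sinh_capped_trace} and Lemma~\ref{lem:sinh_scalar}(c) with $\theta=L/|J|$, combined with the budget $s\Phi(z)<s\le|J|-4$. Your explicit check that $\frac{2d}{n}\phi(\Delta/\sqrt n)=1$ and your justification of lower semicontinuity on $\{s>0\}$, via extended-real continuity of $\phi$ on $[0,\infty)$, are both sound.
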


\begin{proof}
The origin lies in $\pazocal K\cap\pazocal U$, with $s(0)=n-4>0$. Lemma~\ref{lem:sinh_scalar}(d) gives
\begin{equation}
\Phi(0)=\frac{2d}{n-4}\phi\Bigl(\frac{\Delta}{\sqrt{n-4}}\Bigr)
<\frac{2d}{n}\phi\Bigl(\frac{\Delta}{\sqrt n}\Bigr)=1.
\label{eq:sinh_origin}
\end{equation}

The set $\pazocal K$ is nonempty and compact: it is a closed subset of the cube, since $R$ and the row inner products are continuous. We verify lower semicontinuity of $\Phi$ relative to $\pazocal K$. At a point with $s(z)>0$ and all row gaps positive, the formula is smooth. At a point with $s(z)>0$ and a zero row gap, the corresponding term tends to $\infty$ from positive gaps and is infinite at zero gaps, while the scale stays bounded away from zero. At a point with $s(z)=0$, the assigned value $0$ is at most the lower limit of nearby values, since $\Phi\geq0$. These cases cover $\pazocal K$, so $\Phi$ attains a minimum there. This minimum is finite by Eq.~\eqref{eq:sinh_origin}. Let $z$ be any minimizer. Then
\begin{equation}
\Phi(z)=\min_{\pazocal K}\Phi\leq\Phi(0)<1.
\label{eq:sinh_value}
\end{equation}
Because $z\in\pazocal K$, the row constraints and symmetry give $\|Az\|_\infty\leq\Delta$. If $R(z)=4$, both conclusions follow. Suppose instead that $R(z)>4$, so $s(z)>0$. Since $\Phi(z)<1$, $\|Az\|_\infty < \Delta$. Therefore $z\in\pazocal U$.

Define
\begin{equation*}
J:=\{j \in [n]:|z_j|<1\},\qquad
V:=\{v\in\R^n:\langle z,v\rangle=0, v_j=0 \ \forall j\notin J\}.
\end{equation*}
Then
\begin{equation}
|J| \ge \sum_{j\in J}(1-z_j^2) = R(z) > 4.
\label{eq:sinh_free_coordinates}
\end{equation}
Since $V$ is supported on $J$ with only one linear constraint $\langle z,v \rangle = 0$, we have $\dim V\geq|J|-1$.

For every fixed $v\in V$, both signs of sufficiently small $\lambda$ keep $z+\lambda v$ in $\pazocal K$: frozen coordinates stay fixed, free coordinates remain in $(-1,1)$, $R(z+\lambda v)=R(z)-\lambda^2\|v\|_2^2>4$ for small $\lambda$, and $\|A(z+\lambda v)\|_\infty < \Delta$ by continuity. The potential is therefore smooth along this line. By minimality of $z$, the second derivative of $\lambda \mapsto \Phi(z+\lambda v)$ at $\lambda = 0$ is nonnegative. Define
\[
w_i:=\phi''\Bigl(\frac{\Delta-\langle a_i,z\rangle}{\sqrt{s(z)}}\Bigr)>0,
\qquad L:=\sum_{i=1}^{2d}\psi\Bigl(\frac{\Delta-\langle a_i,z\rangle}{\sqrt{s(z)}}\Bigr)>0;
\]
positivity follows from Lemma~\ref{lem:sinh_scalar}(a) and~(b). Lemma~\ref{lem:sinh_hessian} yields $\sum_{i=1}^{2d}w_i\langle a_i,v\rangle^2\geq L\|v\|_2^2$ and so
\[
	\dim V  \le \frac{|J|}{L} \sum_{i=1}^{2d}\min\Big\{w_i,\frac{L}{|J|}\Big\}.
\]
By Eq.~\eqref{eq:sinh_value} and Eq.~\eqref{eq:sinh_free_coordinates}, we have
\begin{equation}
\sum_{i=1}^{2d}\phi\Bigl(\frac{\Delta-\langle a_i,z\rangle}{\sqrt{s(z)}}\Bigr)=s(z)\Phi(z)<s(z)\leq|J|-4.
\label{eq:sinh_budget}
\end{equation}
Therefore,
\begin{align*}
|J|-1\leq\dim V
&\leq\frac{|J|}{L} \sum_{i=1}^{2d}\min\Big\{w_i,\frac{L}{|J|}\Big\}\\
&\leq\frac{3|J|}{4L}\sum_{i=1}^{2d}\psi\Bigl(\frac{\Delta-\langle a_i,z\rangle}{\sqrt{s(z)}}\Bigr)
+\frac14\sum_{i=1}^{2d}\phi\Bigl(\frac{\Delta-\langle a_i,z\rangle}{\sqrt{s(z)}}\Bigr)\\
&=\frac{3|J|}4+\frac14\sum_{i=1}^{2d}\phi\Bigl(\frac{\Delta-\langle a_i,z\rangle}{\sqrt{s(z)}}\Bigr)\\
&<\frac{3|J|+|J|-4}4=|J|-1.
\end{align*}
The third inequality is Lemma~\ref{lem:sinh_scalar}(c), with $\theta=L/|J|$; the fourth is Eq.~\eqref{eq:sinh_budget}. This contradiction rules out $R(z)>4$. Therefore $R(z)=4$.
\end{proof}

\begin{proof}[Proof of Theorem~\ref{thm:upper_intro}]
For $n\leq4$, any signing has discrepancy at most $n\leq4<\Delta+4$. For $n>4$, Proposition~\ref{prop:sinh_fractional} yields $z\in [-1,1]^n$ with $\|Az\|_\infty\leq \Delta$ and $R(z)=4$. Rounding to the nearest sign vector $x$ gives
\begin{equation*}
\|Ax\|_\infty\leq\|Az\|_\infty+R(z)\leq \Delta+4=\sqrt{3n\operatorname{arsinh}(10d/n)}+4,
\end{equation*}
where the first inequality is Lemma~\ref{lem:sinh_rounding}.
\end{proof}
\section*{Acknowledgments}

This work was done while the second author was visiting Microsoft Research Redmond. The authors used GPT-5.5 Pro, GPT-5.6 Pro, GPT-6 Pro, Fable 5 and Fable 5.1 during this project to explore
proof strategies and assist with verification. Every AI-generated proof
was verified and rewritten by the authors, who take full responsibility
for the paper.
\FloatBarrier



\appendix


\section{Proof of Lemma~\ref{lem:binomial}}

\begin{proof} Throughout, let $\beta_k:=2^{-k}\binom{k}{k/2}$. We first prove Gautschi's inequality~\cite[(5.6.4)]{DLMF}
\begin{equation}\label{eq:gautschi}
\sqrt{\frac{2}{\pi(k+1)}}\le \beta_k\le
\sqrt{\frac{2}{\pi k}}.
\end{equation}
Let $I_r:=\int_0^{\pi/2}\sin^r x\,dx$. We have $I_r=(r-1)I_{r-2}/r$ for $r\ge2$, where the identity follows from integration by parts. The relevant values are
\[
I_k=\frac\pi2\,\beta_k,
\qquad
I_{k+1}=\frac1{(k+1)\beta_k},
\qquad
I_{k-1}=\frac1{k\beta_k}.
\]
These three identities follow by iterating the recurrence from $I_0=\pi/2$ and $I_1=1$. As $0\le\sin x\le1$ on $[0,\pi/2]$, $I_k$ is decreasing, so $I_{k+1}\le I_k\le I_{k-1}$. Thus the identities above give Eq.~\eqref{eq:gautschi}.

{\bf Proof of (a).} For the expectation identity, note that for $0\le r\le k$, the event $S_k=2r-k$ occurs precisely when $r$ of the $k$ signs are positive, so it has probability $2^{-k}\binom kr$. We obtain
\[
\begin{aligned}
\E[|S_k|]
&=2^{1-k}\sum_{r=k/2+1}^k(2r-k)\binom kr \\ & =k2^{1-k}\sum_{r=k/2+1}^k
\left[\binom{k-1}{r-1}-\binom{k-1}{r}\right]\\
&=k2^{1-k}\binom{k-1}{k/2}=k\beta_k
\le \sqrt{\frac2\pi}\,\sqrt k<0.8\sqrt k.
\end{aligned}
\]

\noindent{\bf Proof of (b).}
For every $s\in\R$, $\frac{e^s + e^{-s}}{2} =\sum_{r\ge0}\frac{s^{2r}}{(2r)!}
\le\sum_{r\ge0}\frac{s^{2r}}{2^r r!}=e^{s^2/2}$. Therefore,
\[
\E\left[\exp\left(\frac{tS_k}{\sqrt k}\right)\right]
=\left(\frac{\exp(\frac t{\sqrt k}) + \exp(-\frac t{\sqrt k})}{2}\right)^k
\le e^{t^2/2}.
\]
\medskip
\noindent{\bf Proof of (c).} Define \[
F_j:=2^{-k}\sum_{i=0}^j\binom ki
\qquad(0\le j\le k).
\]
We will use
\[
\frac{1-x}{1+x}\le e^{-2x}\qquad(0\le x<1),
\]
which follows by integrating
$\frac{d}{dx}\log\frac{1-x}{1+x}=-2/(1-x^2)\le-2$. We claim that
\begin{equation}\label{eq:Fbound}
F_j\le\frac12\exp\left(
\sqrt{\frac{2}{\pi k}}\,(2j-k+1)
\right)
\qquad(0\le j\le k).
\end{equation}
For $j\ge k/2$,
\[
F_j
\le\frac{1+\beta_k}{2}+(j-k/2)\beta_k
=\frac12\bigl(1+\beta_k(2j-k+1)\bigr)
\le\frac12\exp\left(
\sqrt{\frac{2}{\pi k}}\,(2j-k+1)
\right).
\]
Here the first step uses $F_{k/2}=(1+\beta_k)/2$ by symmetry and bounds each of the remaining $j-k/2$ point probabilities by the central probability $\beta_k$; the second step collects terms; and the third uses $1+y\le e^y$ together with the upper bound in Eq.~\eqref{eq:gautschi} and $2j-k+1\ge0$.

For the starting value below the center,
\begin{equation}\label{eq:binomial_sqrt_bound}
\sqrt{1+\frac1k}\le1+\frac1{2k}
\le1+\frac12\sqrt{\frac{2}{\pi k}}.
\end{equation}
The first step uses $\sqrt{1+x}\le1+x/2$ for $x\ge0$; and the second uses $1/k\le\sqrt{2/(\pi k)}$, which follows from $k\ge2$ and $\pi<4$. We obtain
\begin{equation}\label{eq:leftbase}
F_{k/2-1}=\frac{1-\beta_k}{2}
\le\frac12
\frac{1-\frac12\sqrt{2/(\pi k)}}{1+\frac12\sqrt{2/(\pi k)}}
\le\frac12\exp\left(-\sqrt{\frac{2}{\pi k}}\right).
\end{equation}
The first step follows from symmetry and the central probability $\beta_k$; the second combines the lower bound in Eq.~\eqref{eq:gautschi} with Eq.~\eqref{eq:binomial_sqrt_bound}, and then rearranges; and the third applies $(1-x)/(1+x)\le e^{-2x}$ with $x=\frac12\sqrt{2/(\pi k)}\in(0,1)$.

For $1\le j\le k/2-1$, we have
\begin{equation}\label{eq:binomial_ratio_product}
\frac{F_{j-1}}{2^{-k}\binom kj}
=\sum_{r=1}^j\prod_{q=0}^{r-1}
\frac{j-q}{k-j+q+1}.
\end{equation}
This identity follows by dividing the defining sum for $F_{j-1}$ by $2^{-k}\binom kj$, reindexing by $r:=j-i$, and expanding the binomial ratios as products. We next obtain
\begin{equation}\label{eq:binomial_ratio_intermediate}
\frac{F_{j-1}}{F_j}\le\frac{F_{k/2-2}}{F_{k/2-1}}
=1-\frac{2k\beta_k}{(k+2)(1-\beta_k)}
\le1-\frac{2k\sqrt{2/(\pi k)}}{(k+2)\bigl(\sqrt{1+1/k}-\sqrt{2/(\pi k)}\bigr)}.
\end{equation}
The first step follows because every factor in the product in Eq.~\eqref{eq:binomial_ratio_product} increases with $j$, and the sum gains a positive term when $j$ increases; hence $F_{j-1}/(F_j-F_{j-1})$, and therefore $F_{j-1}/F_j$, increases with $j$. The second step uses $F_{k/2-1}=(1-\beta_k)/2$ and $2^{-k}\binom{k}{k/2-1}=k\beta_k/(k+2)$, the latter following from the ratio of adjacent binomial coefficients. The third step uses the lower bound in Eq.~\eqref{eq:gautschi} and monotonicity of $b/(1-b)$ for $0<b<1$. Also,
\begin{equation}\label{eq:binomial_denominator_bound}
(k+2)\sqrt{1+\frac1k}
\le k+\frac52+\frac1k
\le k+3
\le k+2(k+1)\sqrt{\frac{2}{\pi k}}.
\end{equation}
The first step uses the first inequality in Eq.~\eqref{eq:binomial_sqrt_bound} and expands the product; the second uses $1/k\le1/2$; and the third uses $2(k+1)\sqrt{2/(\pi k)}\ge3$, which follows from $\sqrt{2/\pi}>3/4$ and $k+1\ge2\sqrt k$. Thus,
\begin{equation}\label{eq:Fratio}
\frac{F_{j-1}}{F_j}
\le\frac{1-\sqrt{2/(\pi k)}}{1+\sqrt{2/(\pi k)}}
\le\exp\left(-2\sqrt{\frac{2}{\pi k}}\right).
\end{equation}
The first step applies Eq.~\eqref{eq:binomial_denominator_bound} to Eq.~\eqref{eq:binomial_ratio_intermediate}, bounding the denominator by $k(1+\sqrt{2/(\pi k)})$ and simplifying. The second step applies $(1-x)/(1+x)\le e^{-2x}$ with $x=\sqrt{2/(\pi k)}\in(0,1)$. It follows that
\begin{equation}\label{eq:binomial_below_center}
F_{k/2-r}
\le\frac12\exp\left(-\sqrt{\frac{2}{\pi k}}\,(2r-1)\right)
\qquad(1\le r\le k/2),
\end{equation}
where the inequality starts from Eq.~\eqref{eq:leftbase} and applies Eq.~\eqref{eq:Fratio} $r-1$ times. Eq.~\eqref{eq:binomial_below_center} proves Eq.~\eqref{eq:Fbound} for $j<k/2$ by taking $r:=k/2-j$. When $k=2$, the ratio range is empty and \eqref{eq:leftbase} already suffices.

Finally, if $t\sqrt k-1<-k$, the desired probability is zero. Otherwise choose the largest $j\in\{0,\ldots,k\}$ with $2j-k\le t\sqrt k-1$; this also covers thresholds above the support. Then
\[
\Pp[S_k\le t\sqrt k-1]
=F_j
\le\frac12\exp\left(\sqrt{\frac2\pi}\,t\right),
\]
where the first step follows from the definition of $F_j$ and the choice of $j$; and the second uses Eq.~\eqref{eq:Fbound}, $2j-k+1\le t\sqrt k$, and monotonicity of the exponential.

\medskip
\noindent{\bf Proof of (d).} We consider three ranges of $u$. If $u\le1-\frac52\sqrt{\delta}$, the right-hand side already exceeds $1$.

Suppose next that $|u-1|\le\frac52\sqrt{\delta}$, and set $t:=\frac{1+\delta-u}{\sqrt\delta}+\frac1{\sqrt k}$. By Lemma~\ref{lem:binomial}(c),
\begin{equation}\label{eq:binomial_intermediate_tail}
\Pp\left[u+\sqrt{\frac\delta k}S_k\le1+\delta\right]
\le\frac12\exp\left(\sqrt{\frac2\pi}\,t\right).
\end{equation}
It remains to compute
\begin{equation}\label{eq:binomial_exponent_comparison}
\sqrt{\frac2\pi}\,t-\frac{1-u^2}{\sqrt{2\pi\delta}}
=\sqrt{\frac2\pi}\left(\frac{(1-u)^2+2\delta}{2\sqrt{\delta}}+\frac1{\sqrt k}\right)
\le\frac45\left(\frac{\frac{25}{4}\delta+2\delta}{2\sqrt{\delta}}+\frac{\sqrt{\delta}}2\right)
=\frac{37}{10}\sqrt{\delta}
<4\sqrt{\delta}.
\end{equation}
Combining Eq.~\eqref{eq:binomial_exponent_comparison} with Eq.~\eqref{eq:binomial_intermediate_tail} gives the claimed bound in this intermediate range.

Finally, suppose $u\ge1+\frac52\sqrt{\delta}$, and set $h:=(u-1)/\sqrt{\delta}\ge5/2$. By Markov's inequality and Lemma~\ref{lem:binomial}(b),
\[
\Pp\left[S_k\le-(h-\sqrt{\delta})\sqrt k\right]
=\Pp\left[e^{-(h-\sqrt{\delta})S_k/\sqrt k}\ge e^{(h-\sqrt{\delta})^2}\right]
\le e^{-(h-\sqrt{\delta})^2}\E\left[e^{-(h-\sqrt{\delta})S_k/\sqrt k}\right]\le e^{-(h-\sqrt{\delta})^2/2}.
\]
The first step exponentiates the event, using $h-\sqrt\delta>0$; the second applies Markov's inequality to $e^{-(h-\sqrt\delta)S_k/\sqrt k}$; and the third applies Lemma~\ref{lem:binomial}(b) with $t=-(h-\sqrt\delta)$ and combines the exponential factors.
In this range of $u$, $-(h-\sqrt{\delta})^2/2<\frac{1-u^2}{\sqrt{2\pi\delta}}-\log2$.
Thus $\Pp\left[u+\sqrt{\frac\delta k}S_k\le1+\delta\right]
\le\frac12\exp\left(\frac{1-u^2}{\sqrt{2\pi\delta}}\right).$
\end{proof}

\section{Proof of Lemma~\ref{lem:noise}}

\begin{proof}
{\bf Proof of (a).}
Let $s_i := 1$ for $z_i \ge 0$ and $-1$ otherwise. If $\|z+Y\|_\infty\le(1+\delta)\sqrt n$, then for every $i$,
\[
|z_i|+s_iY_i=s_i(z_i+Y_i)
\le|z_i+Y_i|
\le\|z+Y\|_\infty
\le(1+\delta)\sqrt n.
\]
By symmetry, the $s_iY_i$ are independent copies of $Y_1$. Since $\delta n\ge2^{24}$ is even, we may apply Lemma~\ref{lem:binomial}(d) with $u=|z_i|/\sqrt n$. Using $\|z\|_2^2=(1-\delta)n^2$, we obtain
\begin{align*}
\Pp\bigl[\|z+Y\|_\infty\le(1+\delta)\sqrt n\bigr]
&\le\prod_{i=1}^n
\Pp\left[\frac{|z_i|}{\sqrt n}+\frac{s_iY_i}{\sqrt n}\le1+\delta\right]\\
&\le2^{-n}\exp\left(
4\sqrt\delta\,n
+\frac{1}{\sqrt{2\pi\delta}}
\left(n-\frac{\|z\|_2^2}{n}\right)
\right)\\
&=2^{-n}\exp\left(
\left(4+\frac12\sqrt{\frac2\pi}\right)\sqrt\delta\,n
\right)\\
&\le2^{-n} 2^{n/320}.
\end{align*}

\medskip
\noindent{\bf Proof of (b).} Now suppose in addition $\|z\|_1\le0.99n^{3/2}$, and denote $I=\{i:|z_i|>\sqrt n\}$.
Then
\begin{align*}
\sum_{i\in I}(z_i^2-n)
&\ge\sum_{i=1}^n(z_i^2-\sqrt n\,|z_i|)\\
&=\|z\|_2^2-\sqrt n\,\|z\|_1\\
&\ge(0.01-\delta)n^2 \ge\frac{n^2}{200}.
\end{align*}
The first inequality uses $z_i^2-n\ge z_i^2-\sqrt n\,|z_i|$ on $I$ and $z_i^2-\sqrt n\,|z_i|\le0$ outside $I$. Applying Lemma~\ref{lem:binomial}(d) with $u=|z_i|/\sqrt n$ for $i\in I$, and using independence and symmetry as above, gives
\begin{align*}
\Pp\bigl[\|z+Y\|_\infty\le(1+\delta)\sqrt n\bigr]
&\le\prod_{i\in I}
\Pp\left[\frac{|z_i|}{\sqrt n}+\frac{s_iY_i}{\sqrt n}\le1+\delta\right]\\
&\le2^{-|I|}
\exp\left(
4\sqrt\delta\,|I|
-\frac{1}{n\sqrt{2\pi\delta}}
\sum_{i\in I}(z_i^2-n)
\right)\\
&\le\exp\left(
\left(4\sqrt\delta-
\frac1{400\sqrt{2\delta}}
\right)n
\right)\\
&\le2^{-3n}. \qedhere
\end{align*}
\end{proof}

\section{Scalar calculations for the upper bound}
\label{sec:sinh_scalar_reductions}

We give the elementary parts of Lemma~\ref{lem:sinh_scalar} and reduce its capped inequality to one scalar comparison.

\begin{proof}
{\bf Proof of (a) and (b).}
For $t>0$, define $u:=t^2/3$. Direct differentiation gives
\begin{align*}
\phi'(t)&=-\frac{10t\cosh u}{3\sinh^2u}<0,\\
\phi''(t)&=\frac{10}{3\sinh u}[-\coth u+2u(1+2/\sinh^2u)],\\
\psi(t)&=\frac{10}{\sinh u}(u\coth u-1).
\end{align*}
Here the hyperbolic cosine and hyperbolic cotangent are given by
\[
\cosh u:=\frac{e^u+e^{-u}}2,\qquad
\coth u:=\frac{\cosh u}{\sinh u}=\frac{e^u+e^{-u}}{e^u-e^{-u}}\qquad(u>0).
\]
The first inequality follows because $t>0$. The function $u\cosh u-\sinh u$ vanishes at zero and has derivative $u\sinh u>0$, which proves $\psi(t)>0$. After multiplication by $\sinh^2u$, the bracket in the expression for $\phi''$ becomes
\begin{equation*}
2u(\sinh^2u+2)-\sinh u\cosh u.
\end{equation*}
This expression vanishes at zero and has derivative $3+4u\sinh u\cosh u>0$, so $\phi''(t)>0$. Positivity of $\phi$, its divergence at zero, and $\phi(c)=1/2$ follow directly from its definition.

{\bf Proof of (c).}
We use the one-variable comparison
\begin{equation}
\Big(1-\frac{\phi(t)}4\Big)\phi''(t)\leq\frac34\psi(t)\qquad(t>0).
\label{eq:sinh_comparison}
\end{equation}
This is a scalar inequality for the positive-input formula $\phi(t)=5/\sinh(t^2/3)$, with $t>0$; its verification is omitted.

We consider two cases separately. One case is $\phi (t) \geq 4$. The other case is $\phi(t) \in (0,4)$.

In the first case, we can show
\begin{equation*}
\min\{\phi''(t),\theta\}\leq\theta\leq\frac14\theta\phi(t)\leq\frac34\psi(t)+\frac14\theta\phi(t),
\end{equation*}
where the last inequality follows from part~(b). 

For the second case when $0<\phi(t)<4$,
\begin{equation*}
\min\{\phi''(t),\theta\}\leq\Big(1-\frac{\phi(t)}4\Big)\phi''(t)+\frac{\phi(t)}4\theta
\leq\frac34\psi(t)+\frac14\theta\phi(t),
\end{equation*}
where the first inequality bounds the minimum by a convex combination, and the second step is Eq.~\eqref{eq:sinh_comparison}.

{\bf Proof of (d).}
Fix $\Delta>0$ and define $t:=\Delta/\sqrt s$. We have
\begin{equation}
\frac{\partial}{\partial s}\Big[\frac1s\phi\Bigl(\frac \Delta{\sqrt s}\Bigr)\Big]
=\frac{-\phi(t)-t\phi'(t)/2}{s^2}
=\frac{\psi(t)}{2s^2}>0,
\label{eq:sinh_scale_monotonicity}
\end{equation}
where the first step follows from the product and chain rules; the second step is the definition of $\psi$; and the third step follows from part~(b).
\end{proof}




\begin{thebibliography}{99}

\bibitem{AlonSpencer}
Noga Alon and Joel H. Spencer,
\emph{The Probabilistic Method}, 4th ed., Wiley, 2016, \S13.2,
\url{https://cs.nyu.edu/~spencer/sixsigma.pdf}.

\bibitem{Bansal}
Nikhil Bansal,
\emph{Constructive algorithms for discrepancy minimization},
Proceedings of FOCS 2010, 3--10,
\url{https://arxiv.org/abs/1002.2259}.

\bibitem{Belshaw}
Adrian William Belshaw,
\emph{Strong Normality, Modular Normality, and Flat Polynomials:
Applications of Probability in Number Theory and Analysis},
Ph.D. thesis, Simon Fraser University, 2013, Chapter~5,
\url{https://summit.sfu.ca/item/13657}.

\bibitem{LovettMeka}
Shachar Lovett and Raghu Meka,
\emph{Constructive discrepancy minimization by walking on the edges},
Proceedings of FOCS 2012, 489--498,
\url{https://arxiv.org/abs/1203.5747}.

\bibitem{PesentiVladu}
Lucas Pesenti and Adrian Vladu,
\emph{Discrepancy minimization via regularization},
Proceedings of SODA 2023, 1734--1758;
corrected version, arXiv:2211.05509v2, April 14, 2026,
\url{https://arxiv.org/abs/2211.05509v2}.

\bibitem{ZhaoNotes}
Yufei Zhao,
\emph{Probabilistic Methods in Combinatorics},
lecture notes, MIT 18.226, Fall 2020, \S5.1,
\url{https://yufeizhao.com/pm/fa20/probmethod_notes.pdf}.

\bibitem{BandeiraEtAl}
Afonso S. Bandeira, Anastasia Kireeva, Antoine Maillard, and Almut R\"odder,
\emph{Randomstrasse101: Open Problems of 2024},
arXiv:2504.20539 (2025),
\url{https://arxiv.org/abs/2504.20539}.

\bibitem{deLauney}
Warwick de Launey,
\emph{On the asymptotic existence of {Hadamard} matrices},
J. Combin. Theory Ser. A \textbf{116} (2009), 1002--1008,
\url{https://arxiv.org/abs/1003.4001}.

\bibitem{DLMF}
{National Institute of Standards and Technology},
\emph{{NIST Digital Library of Mathematical Functions}},
\S5.6(i), equation~(5.6.4), Gautschi's inequality,
\url{https://dlmf.nist.gov/5.6.E4}.

\bibitem{Hoeffding}
Wassily Hoeffding,
\emph{Probability inequalities for sums of bounded random variables},
J. Amer. Statist. Assoc. \textbf{58} (1963), no.~301, 13--30.

\bibitem{KavutYucel}
Sel\c{c}uk Kavut and Melek Diker Y\"ucel,
\emph{9-variable {Boolean} functions with nonlinearity 242 in the generalized
  rotation symmetric class},
Information and Computation \textbf{208} (2010), no.~4, 341--350,
\url{https://doi.org/10.1016/j.ic.2009.12.002}.

\bibitem{Massart}
Pascal Massart,
\emph{{Tusnady}'s lemma, 24 years later},
Ann. Inst. H. Poincar\'e Probab. Statist. \textbf{38} (2002), no.~6, 991--1007.

\bibitem{OlsonSpencer}
John E. Olson and Joel Spencer,
\emph{Balancing families of sets},
J. Combin. Theory Ser. A \textbf{25} (1978), no.~1, 29--37.

\bibitem{Paley}
R. E. A. C. Paley,
\emph{On orthogonal matrices},
Journal of Mathematics and Physics \textbf{12} (1933), 311--320,
\url{https://doi.org/10.1002/sapm1933121311}.

\bibitem{ReisThesis}
Victor Reis,
\emph{Vector Balancing and Integer Programming},
Ph.D. thesis, University of Washington, 2023,
\url{https://tinyurl.com/VBandIP}.

\bibitem{Rothvoss}
Thomas Rothvoss,
\emph{Probabilistic Combinatorics},
lecture notes, University of Washington, Winter 2019, version of March 15,
  2019.

\bibitem{Schmidt}
{Kai-Uwe} Schmidt,
\emph{Asymptotically optimal {Boolean} functions},
J. Combin. Theory Ser. A \textbf{164} (2019), 50--59,
\url{https://doi.org/10.1016/j.jcta.2018.12.005}.

\bibitem{Spencer}
Joel Spencer,
\emph{Six standard deviations suffice},
Trans. Amer. Math. Soc. \textbf{289} (1985), no.~2, 679--706,
\url{https://doi.org/10.1090/S0002-9947-1985-0784009-0}.

\bibitem{Spielman}
Daniel Spielman,
\emph{Discrepancy Theory and Randomized Controlled Trials},
Mathematical Picture Language Seminar, Harvard University, recorded talk,
  October 31, 2023, at 19:40,
\url{https://www.youtube.com/watch?v=KhUaImzb81w&t=1180s}.

\bibitem{Tao}
Terence Tao,
\emph{{Talagrand}'s concentration inequality},
What's new, June 9, 2009.

\end{thebibliography}
\end{document}